\documentclass[oneside,english]{amsart}
\usepackage[T1]{fontenc}
\usepackage[latin9]{inputenc}
\usepackage{color}
\usepackage{verbatim}
\usepackage{mathrsfs}
\usepackage{amstext}
\usepackage{amsthm}
\usepackage{amssymb}
\usepackage{esint}

\makeatletter
\numberwithin{equation}{section}
\numberwithin{figure}{section}
\theoremstyle{plain}
\newtheorem*{thm*}{\protect\theoremname}
\theoremstyle{plain}
\newtheorem*{cor*}{\protect\corollaryname}
\theoremstyle{plain}
\newtheorem{thm}{\protect\theoremname}
\theoremstyle{plain}
\newtheorem{cor}[thm]{\protect\corollaryname}
\theoremstyle{plain}
\newtheorem{lem}[thm]{\protect\lemmaname}
\theoremstyle{remark}
\newtheorem{rem}[thm]{\protect\remarkname}
\theoremstyle{definition}
\newtheorem{example}[thm]{\protect\examplename}


\usepackage[small]{diagrams}

\makeatother

\usepackage{babel}
\providecommand{\corollaryname}{Corollary}
\providecommand{\examplename}{Example}
\providecommand{\lemmaname}{Lemma}
\providecommand{\remarkname}{Remark}
\providecommand{\theoremname}{Theorem}

\begin{document}
\title[Global Lagrangian construction on $2$-manifolds]{On a Global Lagrangian construction for ordinary variational equations
on 2-manifolds}
\author{Zbyn\v{e}k Urban}
\address[Zbyn\v{e}k Urban]{Dpt. of Mathematics, Faculty of Civil Engineering, V\v{S}B-Technical University of Ostrava\\
Ludv\'{i}ka Pod\'{e}\v{s}t\v{e} 1875/17, 708 33 Ostrava,
Czech Republic}
\email[Corresponding author]{zbynek.urban@vsb.cz}
\author{Jana Voln\'a}
\address[Jana Voln\'a]{Dpt. of Mathematics, Faculty of Civil Engineering, V\v{S}B-Technical University of Ostrava\\
Ludv\'{i}ka Pod\'{e}\v{s}t\v{e} 1875/17, 708 33 Ostrava,
Czech Republic}
\email{jana.volna@vsb.cz}
\begin{abstract}
Locally variational systems of differential equations on smooth manifolds,
having certain de Rham cohomology group trivial, automatically possess
a global Lagrangian. This important result due to Takens is, however,
of sheaf-theoretic nature. A new constructive method of finding a
global Lagrangian for second-order ODEs on 2-manifolds is described
on the basis of solvability of exactness equation for Lepage 2-forms,
and the top-cohomology theorems. Examples from geometry and mechanics
are discussed.
\end{abstract}

\keywords{Lagrangian; Euler\textendash Lagrange mapping; Lepage form; variational differential equations; de Rham cohomology.}
\thanks{ZU appreciates support of the Visegrad grant No. 51810810 at the University
of Pre\v{s}ov.}
\subjclass[2010]{58E30; 58A15; 14F40; 70H99 }
\maketitle

\section{Introduction}

In this paper, we address the problem of finding a \emph{concrete}
\textit{global} Lagrangian for \textit{variational} second-order ODEs on smooth fibered manifolds. The existence
of a~global variational principle for given equations is essentially
influenced by topology of the underlying space: for \emph{ordinary}
variational equations of arbitrary order it depends on the \textit{second
de Rham cohomology} group $H_{\mathrm{dR}}^{2}Y$ of the underlying
fibered manifold $Y$. The following theorem belongs to important\emph{
}global\emph{ }results achieved within the calculus of variations
on smooth manifolds. 
\begin{thm*}[Takens, 1979]
\label{thm:Takens} Each locally variational source equation is globally
variational provided $H_{\mathrm{dR}}^{n+1}\left(Y;\mathbb{R}\right)=0.$
\end{thm*}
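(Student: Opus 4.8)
The plan is to recast the statement inside the variational sequence of Krupka (equivalently, the Euler--Lagrange edge complex of the variational bicomplex) on a jet prolongation $J^{r}Y$ of the fibered manifold $\pi\colon Y\to X$ with $\dim X=n$. In this framework a source (Euler--Lagrange-type) form $E$ attached to the equations lives in the term of degree $n+1$, the Euler--Lagrange mapping $\mathcal E$ carries Lagrangians (degree $n$) to source forms, and the next arrow is the Helmholtz mapping $\mathcal H$. The two notions of the statement then translate into homological language: \emph{local variationality} of $E$ means precisely that the Helmholtz form vanishes, $\mathcal H(E)=0$, i.e. $E$ is a cocycle, $E\in\ker\mathcal H$; \emph{global variationality} means $E=\mathcal E(\lambda)$ for a globally defined Lagrangian $\lambda$, i.e. $E$ is a coboundary, $E\in\operatorname{im}\mathcal E$. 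Thus the theorem is equivalent to the assertion that, under the cohomological hypothesis, $\ker\mathcal H=\operatorname{im}\mathcal E$ on global sections.

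The key input I would invoke is the fundamental fact that the variational sequence is a (soft, hence acyclic) resolution of the constant sheaf $\mathbb R_{Y}$, so that its cohomology is canonically isomorphic to the de Rham cohomology of the total space,
\[
H^{k}(\text{variational sequence})\cong H_{\mathrm{dR}}^{k}(Y;\mathbb R),\qquad k\ge 0 .
\]
Applying this in degree $n+1$ gives
\[
\ker\mathcal H\big/\operatorname{im}\mathcal E\;\cong\;H_{\mathrm{dR}}^{\,n+1}(Y;\mathbb R),
\]
so the class $[E]$ of a locally variational source form is exactly the obstruction to its global variationality, and it lies in $H_{\mathrm{dR}}^{n+1}(Y;\mathbb R)$. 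The hypothesis $H_{\mathrm{dR}}^{n+1}(Y;\mathbb R)=0$ then forces $[E]=0$, i.e. $E\in\operatorname{im}\mathcal E$, which is precisely the existence of a global Lagrangian. This is the entire argument at the level of cohomology.

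What remains -- and this is where the real work sits -- is to justify the isomorphism above. I would do this by the abstract de Rham theorem: first a local \emph{variational Poincar\'e lemma}, asserting exactness of the variational sequence over fibered charts with contractible base and fiber (the jet-space analogue of the Volterra/Vainberg inverse-problem construction, which also supplies the local Lagrangian), and second the fineness of the quotient sheaves $\Omega^{k}_{r}/\Theta^{k}_{r}$, which, being $C^{\infty}$-modules, are acyclic and permit gluing by a partition of unity. Equivalently, and closer to Takens' own sheaf-theoretic reasoning, I would run a \v{C}ech argument: choose local Lagrangians $\lambda_{i}$ with $\mathcal E(\lambda_{i})=E$ on a good cover $\{U_{i}\}$, note that the differences $\lambda_{i}-\lambda_{j}$ are $\mathcal E$-closed and hence, by the local lemma, determine a \v{C}ech cocycle whose class is the image of $[E]$ in $H_{\mathrm{dR}}^{n+1}(Y;\mathbb R)$; vanishing of this group allows one to correct the $\lambda_{i}$ on overlaps and patch them into a single global Lagrangian. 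The main obstacle is therefore not the cohomological bookkeeping but establishing the local exactness (the variational Poincar\'e lemma) together with the sheaf-acyclicity needed to identify the abstract obstruction with the concrete de Rham class.
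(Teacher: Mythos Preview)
Your proposal is correct and follows precisely the route the paper itself outlines: the paper does not give an independent proof of Takens' theorem but quotes Krupka's result that the finite-order variational sequence is an acyclic resolution of the constant sheaf $\mathbb{R}_{Y}$ and then invokes the abstract de Rham theorem to conclude that the obstruction to global variationality lies in $H_{\mathrm{dR}}^{n+1}Y$. Your identification of local variationality with $\mathcal{H}(E)=0$, global variationality with $E\in\operatorname{im}\mathcal{E}$, and your appeal to the local variational Poincar\'e lemma together with softness of the quotient sheaves is exactly this argument spelled out in detail, so there is nothing to add.
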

In \cite{Takens}, Takens obtained this result within the framework
of a\emph{~variational bicomplex} theory over sheaves of differential
forms on \emph{infinite} jet prolongations of fibered manifolds over
general \emph{$n$}-dimensional bases. An analogous global result
to this theorem was also obtained by Vinogradov \cite{Vin1,Vin2}
(see \cite{Vin3} for more detail exposition as a~part of $\mathscr{C}$-spectral
sequences using Spencer cohomology). Variational bicomplex theories
have been developed since late seventieth by many authors with the
aim to study a~complex, analogous to the de Rham complex, where the
Euler\textendash Lagrange mapping is included as one of its morphisms
in an exact sequence; see Anderson and Duchamp \cite{Anderson}, Dedecker
and Tulczyjew \cite{Dedecker}, and Tulczyjew \cite{Tulczyjew}. 

A~different approach to similar ideas was developed in the variational
sequence theory by Krupka \cite{Krupka-VarSeq,Krupka-VarSeqMech},
who considered the quotient sequence of the de Rham sequence over
\emph{finite-order} jet prolongations of fibered manifolds with respect
to its \emph{contact subsequence}. Thus, a basic concept of the calculus
of variations, the \emph{Euler\textendash Lagrange mapping}, can be
constructed (for variational functionals on fibered spaces) as the
quotient mapping of the exterior derivative operator $d$, acting
on differential forms, by the restriction of $d$ to the so-called
\textit{contact forms}. The main global result of Krupka \cite{Krupka-VarSeq} reads: \textit{The variational sequence of order $r$ over $Y$ is an acyclic resolution of the constant sheaf $\mathbb{R}_{Y}$ over $Y$.} From this theorem and the well-known abstract de Rham theorem, we immediately
obtain the following corollary: \textit{Let $\varepsilon$ be a locally variational source
form on $J^{r}Y$. If $H_{\mathrm{dR}}^{n+1}Y$ is trivial, then $\varepsilon$
is also globally variational.} This is a~finite-order analogue of the result due to Takens \cite{Takens}.

One should notice, however, that the cohomology conditions in the
variational sequence and the variational bicomplex theory have \emph{different}
meaning. The relationship of these two theories can be found in Pommaret
\cite{Pommaret}, Vitolo \cite{Vitolo}, and Krupka \emph{et. al.}
\cite{KruMorUrbVol}.

A common feature of the previously mentioned works \cite{Anderson,Dedecker,Krupka-VarSeq,Krupka-VarSeqMech,Takens,Tulczyjew,Vin1,Vin2,Vin3,Vitolo}
is an absence of a~\emph{concrete global variational principle},
whose existence is guaranteed by cohomology conditions. To the authors'
knowledge, there is \emph{no general} method how to construct a~global
Lagrangian for locally variational equations. Note that simple examples
show that the well-known Vainberg\textendash Tonti formula (cf. Tonti
\cite{Tonti}, Krupka \cite{Krupka-Lepage}) fails to produce Lagrangians
which are defined globally.

In the framework of global \textcolor{black}{variational} theory on
finite-order jet prolongations of fibered manifolds (cf. Krupka \cite{Book},
and references therein), the formulation of our main problem is the
following: let $\varepsilon$ be a~given \emph{locally variational}
\textit{source form} (also known as a~\textit{dynamical form} in Lagrangian mechanics)
on the second jet prolongation $J^{2}Y$ of a fibered manifold $Y$ over a~$1$-dimensional base.
Then we search for a~(concrete) global Lagrangian $\lambda$, which is a~horizontal differential
$1$-form defined (globally) on $J^{1}Y$, such that $\varepsilon$
coincides with the image of $\lambda$ in the \emph{Euler\textendash Lagrange
mapping}, that is, with the \emph{Euler\textendash Lagrange form}
associated with $\lambda$.

Section 2 is devoted to basic facts of \emph{second-order} variational
ODEs in accordance with the general theory
(cf. Krupkov\'a and Prince \cite{Krupkova-Prince,KruPri}, Krupkov\'a
\cite{Krupkova}), including necessary and sufficient conditions for
local variationality, namely the \emph{Helmholtz conditions}. The geometry of second-order PDEs has been studied recently by Saunders, Rossi and Prince \cite{SRP}. For
local variational principles based on Lepage forms, see Brajer\v{c}\'ik
and Krupka \cite{Brajercik}. 

Recall that a Lepage form represents
a~far-going generalization of a 1-form, introduced by E.~Cartan
within the framework of the calculus of variations (see Krupka \cite{Krupka-Amerika,Krupka-Lepage}).
Roughly speaking, a~Lepage form gives a~geometric description of
the associated variational functional: its variations, extremals and
invariance are characterized by means of the geometric operations
as the exterior derivative, and the Lie derivative of differential forms.
The meaning of Lepage forms for the calculus of variations and their
basic properties have recently been summarized by Krupka, Krupková
and Saunders \cite{KKS}.

In Section 3, we develop the main idea of this paper: to tackle the
problem on the basis of solvability of the \emph{global exactness}
equation for the~\emph{Lepage equivalent }$\alpha_{\varepsilon}$
of a~source form $\varepsilon$. Globally defined $2$-form
$\alpha_{\varepsilon}$ represents an example of a~\emph{Lepage $2$-form}
in Lagrangian mechanics (see Krupkov\'{a} \cite{Krupkova1986,Krupkova}),
and it satisfies the condition $\alpha_{\varepsilon}=d\Theta_{\lambda}$,
which is a~subject of solution with respect to unknown $\lambda$, where $\Theta_{\lambda}$  is the well-known \emph{Cartan form}. Recall that $\Theta_{\lambda}$ depends on the choice of a~Lagrangian $\lambda$
whereas $d\Theta_{\lambda}$ \emph{does not}. As a~result, we reduce the global exactness of Lepage equivalent
$\alpha_{\varepsilon}$ of $\varepsilon$ to global exactness of a~certain
$2$-form $\omega$, defined on the underlying fibered manifold $Y$.

In Section 4, we apply the standard de Rham \emph{top}-cohomology
theory (see Lee \cite{Lee}) to solve global exactness of the differential
$2$-form $\omega$, and combine it with results obtained in Section
3, see Theorem \ref{thm:Top-Var}. Indeed, this step can be proceeded
on smooth manifolds of dimension \emph{two} only, and our general
global Lagrangian construction for locally variational source forms
is therefore restricted to $2$-manifolds with \emph{trivial} the
second de Rham cohomology group of $Y$. We point out that the equation $\omega=d\eta$ need not have a~global solution, and even if solvability of this equation is assured, \textit{no general} construction of its solution is known on smooth $m$-dimensional manifolds. This circumstance makes our problem difficult in general. Note that topic of this paper
is closely related to the \emph{inverse problem of the calculus of
variations}, discussed by most of the authors mentioned above, and
solved by J.~Douglas in his seminal paper (1941) for systems of \emph{two}
ordinary equations of \emph{two} dependent variables. 

Section 5 contains two examples of mechanical systems, 
namely, the \textit{kinetic energy} on the \textit{open M\"obius strip}, and a~\textit{gyroscopic
type} system on the \textit{punctured torus} in the Euclidean space $\mathbb{R}^{3}$, where the corresponding
global variational principles are discussed. We emphasize, however, that the theory
can\emph{ not} be applied to general $m$-manifolds; e.g. for $m=3$,
$H_{\mathrm{dR}}^{2}S^{3}$ of the $3$-sphere $S^{3}$ is \textit{trivial},
nevertheless we can proceed only under additional requirements (cf.
Corollary \ref{cor:Simple}).

Notation and underlying geometric structures are coherent with our
recent work Krupka, Urban, and Voln\'a \cite{KUV}, where general
higher-order formulas can be found. Throughout, we consider fibered
manifolds the Cartesian products $Y=\mathbb{R}\times M$ over the
\emph{real line} $\mathbb{R}$ and projection $\pi$, where $M$ is
a~smooth connected $2$-manifold. Thus, the jet prolongations $J^{1}Y$
and $J^{2}Y$ of $Y$ can be canonically identified with the product
$\mathbb{R}\times T^{1}M$ and $\mathbb{R}\times T^{2}M$, respectively,
where $T^{1}M$ is the tangent bundle of $M$, and $T^{2}M$ denotes
the manifold of \emph{second-order velocities} over $M$. Recall that
elements of $T^{2}M$ are $2$-jets $J_{0}^{2}\zeta\in J^{2}\left(\mathbb{R},M\right)$
with origin $0\in\mathbb{R}$ and target $\zeta(0)\in M$. The jet
prolongations are considered with its natural fibered manifold structure:
if $\left(V,\psi\right)$, $\psi=\left(t,x,y\right)$, is a~fibered
chart on $\mathbb{R}\times M$, the associated chart on $J^{2}Y$
(respectively, $J^{1}Y$) reads $\left(V^{2},\psi^{2}\right)$, $\psi^{2}=\left(t,x,y,\dot{x},\dot{y},\ddot{x},\ddot{y}\right)$
(respectively, $\left(V^{1},\psi^{1}\right)$, $\psi^{1}=\left(t,x,y,\dot{x},\dot{y}\right)$),
where $V^{2}$ (respectively, $V^{1}$) is the preimage of $V$ in
the canonical jet projection $\pi^{2,0}:J^{2}Y\rightarrow Y$ (respectively,
$\pi^{1,0}:J^{1}Y\rightarrow Y$).

Let $W$ be an open set in $Y$, and $\Omega^{1}W$ the exterior algebra
of differential forms on $W^{1}$. By means of charts, we put $hdt=dt$,
$hdx=\dot{x}dt$, $hdy=\dot{y}dt$, $hd\dot{x}=\ddot{x}dt$, $hd\dot{y}=\ddot{y}dt$,
and for any function $f:W^{1}\rightarrow\mathbb{R}$, $hf=f\circ\pi^{2,1}$,
where $\pi^{2,1}$ denotes the canonical jet projection $J^{2}Y\rightarrow J^{1}Y$.
These formulas define a~global homomorphism of exterior algebras
$h:\Omega^{1}W\rightarrow\Omega^{2}W$, called the $\pi$-\emph{horizontalization}.
A~$1$-form $\rho$ on $W^{1}$ is called \emph{contact}, if $h\rho=0$.
In a~fibered chart $\left(V,\psi\right)$, $\psi=\left(t,x,y\right)$,
on $W$, every contact $1$-form $\rho$ has an expression $\rho=A_{x}\omega^{x}+A_{y}\omega^{y}$,
for some functions $A_{x},A_{y}:V^{1}\rightarrow\mathbb{R},$ where
$\omega^{x}=dx-\dot{x}dt$, $\omega^{y}=dy-\dot{y}dt$. For any differential
$1$-form $\rho$ on $W^{1}$, the pull-back $\left(\pi^{2,1}\right)^{*}\rho$
has a~unique decomposition $\left(\pi^{2,1}\right)^{*}\rho=h\rho+p\rho$,
where $h\rho$, resp. $p\rho$, is $\pi^{2}$-\emph{horizontal} (respectively,
\emph{contact}) $1$-form on $W^{2}$. This decomposition can be directly
generalized to arbitrary $k$-forms. For $k=2$, if $\rho$ is a~$2$-form
on $W^{1}$, then we get $\left(\pi^{2,1}\right)^{*}\rho=p_{1}\rho+p_{2}\rho$,
where $p_{1}\rho$ (resp. $p_{2}\rho$) is the $1$-\emph{contact}
(respectively, $2$-\emph{contact}) component of $\rho$, spanned
by $\omega^{x}\wedge dt$, $\omega^{y}\wedge dt$ (respectively, $\omega^{x}\wedge\omega^{y}$).
Analogously, we employ these concepts on $W^{2}$.

The results of this work can be generalized to higher-order variational
differential equations by means of similar methods.

\section{Second-order ordinary variational equations}

Let $\varepsilon$ be a \emph{source form} on $\mathbb{R}\times T^{2}M$.
By definition, $\varepsilon$ is a $1$-contact and $\pi^{2,0}$-horizontal
2-form. In a chart $\left(V,\psi\right)$, $\psi=\left(t,x,y\right)$,
on $\mathbb{R}\times M$, $\varepsilon$ is expressed as
\begin{align}
\varepsilon & =\left(\varepsilon_{x}\omega^{x}+\varepsilon_{y}\omega^{y}\right)\wedge dt,\label{eq:Source}
\end{align}
where the coefficients $\varepsilon_{x}$, $\varepsilon_{y}$ are
differentiable functions on $V^{2}$ , and $\omega^{x}=dx-\dot{x}dt$,
$\omega^{y}=dy-\dot{y}dt$ are contact $1$-forms on $V^{1}$. To
simplify further considerations, but without loss of generality, we
suppose that $\varepsilon_{x}$, $\varepsilon_{y}$ do \textit{not}
depend on the time variable $t$ explicitly, that is $\varepsilon_{x}$,
$\varepsilon_{y}$ are functions of $x$, $y$, $\dot{x}$, $\dot{y}$,
$\ddot{x}$, $\ddot{y}$ only. Source form \eqref{eq:Source} associates
a~system of two second-order differential equations
\begin{equation}
\varepsilon_{x}\left(x,y,\dot{x},\dot{y},\ddot{x},\ddot{y}\right)=0,\quad\varepsilon_{y}\left(x,y,\dot{x},\dot{y},\ddot{x},\ddot{y}\right)=0,\label{eq:SourceSystem}
\end{equation}
for unknown differentiable curves $\zeta$ in $M$, $t\rightarrow\zeta(t)=\left(x\circ\zeta(t),y\circ\zeta(t)\right)$,
defined on an open interval of $\mathbb{R}$. 

$2$-form \eqref{eq:Source} (or system \eqref{eq:SourceSystem})
is called \emph{locally variational}, if there exists a~real-valued
function $\mathscr{L}$ on a chart neighborhood $V^{1}$, $\mathscr{L}=\mathscr{L}\left(t,x,y,\dot{x},\dot{y}\right)$,
such that \eqref{eq:SourceSystem} coincide with the \emph{Euler\textendash Lagrange
equations}, i.e. $\varepsilon_{x}=E_{x}(\mathscr{L})$ and $\varepsilon_{y}=E_{y}(\mathscr{L})$
are the \emph{Euler\textendash Lagrange expressions}, associated with
$\mathscr{L}$,
\begin{equation}
E_{x}(\mathscr{L})=\frac{\partial\mathscr{L}}{\partial x}-\frac{d}{dt}\frac{\partial\mathscr{L}}{\partial\dot{x}},\quad E_{y}(\mathscr{L})=\frac{\partial\mathscr{L}}{\partial y}-\frac{d}{dt}\frac{\partial\mathscr{L}}{\partial\dot{y}}.\label{eq:E-L-expressions}
\end{equation}

A~\emph{Lagrangian} of order $1$ for $Y$ is by definition a~$\pi^{1}$-horizontal
$1$-form $\lambda$ on $W^{1}\subset\mathbb{R}\times T^{1}M$. In
a~fibered chart, $\lambda=\mathscr{L}dt$, where $\mathscr{L}:V^{1}\rightarrow\mathbb{R}$
is a~real-valued function called the (local) \emph{Lagrange function}
associated with $\lambda$. Every Lagrangian $\lambda$ associates
a~source form $E_{\lambda}$, locally expressed by
\[
E_{\lambda}=E_{x}(\mathscr{L})\omega^{x}\wedge dt+E_{y}(\mathscr{L})\omega^{y}\wedge dt.
\]
$E_{\lambda}$ is called the \emph{Euler-Lagrange form}, associated
with $\lambda$. Thus, locally variational forms belong to \emph{image}
of the \emph{Euler\textendash Lagrange mapping} $\lambda\rightarrow E_{\lambda}$.
Note that a~Lagrangian is a~representative of class of $1$-forms,
whereas a~source form is a~representative of class of $2$-forms
in the (quotient) variational sequence over $W\subset Y$; for further
remarks see Krupka \cite{Krupka-VarSeqMech}.

From the definition, it is easy to observe that the coefficients of
locally variational form \eqref{eq:Source} coincide with the Euler\textendash Lagrange
expressions of a Lagrange function with respect to \emph{every} chart.
Nevertheless, such local Lagrange functions, defined on chart neighborhoods
in $\mathbb{R}\times T^{1}M$, need \emph{not} define a (global) function
on $\mathbb{R}\times T^{1}M$. If there exists a Lagrange function
$\mathscr{L}$ for $\varepsilon$ defined on $\mathbb{R}\times T^{1}M$,
then $\varepsilon$ is called \emph{globally variational}.

In the following theorem, we give necessary and sufficient conditions
for locally variational source forms.
\begin{thm}
\label{thm:LocalVariationality}Let $\varepsilon$ be a source form
on $\mathbb{R}\times T^{2}M$, locally expressed by \eqref{eq:Source}
with respect to a chart $\left(V,\psi\right)$, $\psi=\left(t,x,y\right)$,
on $\mathbb{R}\times M$. The following conditions are equivalent: 

\emph{(a)} $\varepsilon$ is locally variational.

\emph{(b)} The functions $\varepsilon_{x}$, $\varepsilon_{y}$ satisfy
identically the system
\begin{align}
 & \frac{\partial\varepsilon_{x}}{\partial\ddot{y}}-\frac{\partial\varepsilon_{y}}{\partial\ddot{x}}=0,\quad\frac{\partial\varepsilon_{x}}{\partial\dot{x}}-\frac{d}{dt}\frac{\partial\varepsilon_{x}}{\partial\ddot{x}}=0,\quad\frac{\partial\varepsilon_{y}}{\partial\dot{y}}-\frac{d}{dt}\frac{\partial\varepsilon_{y}}{\partial\ddot{y}}=0,\nonumber \\
 & \frac{\partial\varepsilon_{x}}{\partial\dot{y}}+\frac{\partial\varepsilon_{y}}{\partial\dot{x}}-\frac{d}{dt}\left(\frac{\partial\varepsilon_{x}}{\partial\ddot{y}}+\frac{\partial\varepsilon_{y}}{\partial\ddot{x}}\right)=0,\label{eq:Helm}\\
 & \frac{\partial\varepsilon_{x}}{\partial y}-\frac{\partial\varepsilon_{y}}{\partial x}-\frac{1}{2}\frac{d}{dt}\left(\frac{\partial\varepsilon_{x}}{\partial\dot{y}}-\frac{\partial\varepsilon_{y}}{\partial\dot{x}}\right)=0.\nonumber 
\end{align}

\emph{(c)} The functions $\varepsilon_{x}$, $\varepsilon_{y}$ are
of the form
\begin{align}
 & \varepsilon_{x}=A_{x}+B_{xx}\ddot{x}+B_{xy}\ddot{y},\quad\varepsilon_{y}=A_{y}+B_{yx}\ddot{x}+B_{yy}\ddot{y},\label{eq:LinearEpsilon}
\end{align}
where the functions $A_{x},A_{y},B_{xx},B_{xy},B_{yx},B_{yy}$ depend
on $x,y,\dot{x},\dot{y}$ only, and satisfy the following system identically,
\begin{align}
 & B_{xy}=B_{yx},\quad\frac{\partial B_{xx}}{\partial\dot{y}}=\frac{\partial B_{xy}}{\partial\dot{x}},\quad\frac{\partial B_{yy}}{\partial\dot{x}}=\frac{\partial B_{xy}}{\partial\dot{y}},\nonumber \\
 & \frac{\partial A_{x}}{\partial\dot{x}}-\frac{\partial B_{xx}}{\partial x}\dot{x}-\frac{\partial B_{xx}}{\partial y}\dot{y}=0,\quad\frac{\partial A_{y}}{\partial\dot{y}}-\frac{\partial B_{yy}}{\partial x}\dot{x}-\frac{\partial B_{yy}}{\partial y}\dot{y}=0,\label{eq:HelmAB}\\
 & \frac{\partial A_{x}}{\partial\dot{y}}+\frac{\partial A_{y}}{\partial\dot{x}}-2\frac{\partial B_{xy}}{\partial x}\dot{x}-2\frac{\partial B_{xy}}{\partial y}\dot{y}=0,\nonumber \\
 & \frac{\partial A_{x}}{\partial y}-\frac{\partial A_{y}}{\partial x}-\frac{1}{2}\frac{\partial}{\partial x}\left(\frac{\partial A_{x}}{\partial\dot{y}}-\frac{\partial A_{y}}{\partial\dot{x}}\right)\dot{x}-\frac{1}{2}\frac{\partial}{\partial y}\left(\frac{\partial A_{x}}{\partial\dot{y}}-\frac{\partial A_{y}}{\partial\dot{x}}\right)\dot{y}=0.\nonumber 
\end{align}

\emph{(d)} The function
\begin{equation}
\mathscr{L}=\mathscr{L}_{T}-\frac{d}{dt}\left(x\int_{0}^{1}C_{x}\left(sx,sy,s\dot{x},s\dot{y}\right)ds+y\int_{0}^{1}C_{y}\left(sx,sy,s\dot{x},s\dot{y}\right)ds\right),\label{eq:VT-1stOrder}
\end{equation}
where functions $C_{x}$, $C_{y}$ are given by conditions \eqref{eq:HelmAB}
as $B_{xy}=\partial C_{x}/\partial\dot{y}=\partial C_{y}/\partial\dot{x}$,
and
\begin{equation}
\mathscr{L}_{T}=x\int_{0}^{1}\varepsilon_{x}\left(sx,sy,s\dot{x},s\dot{y},s\ddot{x},s\ddot{y}\right)ds+y\int_{0}^{1}\varepsilon_{y}\left(sx,sy,s\dot{x},s\dot{y},s\ddot{x},s\ddot{y}\right)ds,\label{eq:VT-Lagrangian}
\end{equation}
 is a Lagrange function for $\varepsilon$ defined on $V^{1}$.

\emph{(e)} To every point of $\mathbb{R}\times T^{2}M$ there is a
neighborhood $W$ and a $2$-contact $2$-form $F_{W}$ on $W$ such
that the form $\alpha_{W}=\varepsilon|_{W}+F_{W}$ is closed. 

\emph{(f)} There exists a closed $2$-form $\alpha_{\varepsilon}$
on $\mathbb{R}\times T^{1}M$ such that $\varepsilon=p_{1}\alpha_{\varepsilon}$.
If $\alpha_{\varepsilon}$ exists, it is unique and it has a chart
expression given by
\begin{align}
\alpha_{\varepsilon} & =\left(\varepsilon_{x}\omega^{x}+\varepsilon_{y}\omega^{y}\right)\wedge dt+\frac{1}{2}\left(\frac{\partial A_{x}}{\partial\dot{y}}-\frac{\partial A_{y}}{\partial\dot{x}}\right)\omega^{x}\wedge\omega^{y}\label{eq:Alfa}\\
 & +B_{xx}\omega^{x}\wedge\dot{\omega}^{x}+B_{xy}\left(\omega^{x}\wedge\dot{\omega}^{y}+\omega^{y}\wedge\dot{\omega}^{x}\right)+B_{yy}\omega^{y}\wedge\dot{\omega}^{y},\nonumber 
\end{align}
where $\omega^{x}=dx-\dot{x}dt$, $\omega^{y}=dy-\dot{y}dt$, $\dot{\omega}^{x}=d\dot{x}-\ddot{x}dt$,
$\dot{\omega}^{y}=d\dot{y}-\ddot{y}dt$, are contact $1$-forms on
$V^{2}$.
\end{thm}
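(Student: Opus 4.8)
The plan is to prove the six conditions equivalent through two interlocking cycles of implications: a \emph{Lagrangian} cycle $(a)\Rightarrow(b)\Leftrightarrow(c)\Rightarrow(d)\Rightarrow(a)$ handling the classical local inverse problem, together with a \emph{Lepage} cycle $(c)\Rightarrow(f)\Rightarrow(e)\Rightarrow(b)$ that folds the differential-form characterizations back into the chain. Since $(d)\Rightarrow(a)$ and $(f)\Rightarrow(e)$ are essentially tautological, the genuine work concentrates in three places: the algebraic equivalence $(b)\Leftrightarrow(c)$, the verification of the Vainberg--Tonti Lagrangian in $(c)\Rightarrow(d)$, and the closure together with globalization of the Lepage equivalent in $(c)\Rightarrow(f)$.

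First I would dispatch $(a)\Rightarrow(b)$ by direct computation: writing $\varepsilon_{x}=E_{x}(\mathscr{L})$, $\varepsilon_{y}=E_{y}(\mathscr{L})$ from \eqref{eq:E-L-expressions} for a local Lagrange function $\mathscr{L}(t,x,y,\dot{x},\dot{y})$ and substituting into the left-hand sides of \eqref{eq:Helm}, each of the five expressions collapses identically, the interior Euler--Lagrange operator annihilating the total-derivative terms. For $(b)\Leftrightarrow(c)$ I would exploit the finite-order structure: the second, third and fourth identities in \eqref{eq:Helm} each carry one cut-off total derivative $d/dt$, which is the only source of dependence on the third-order variables $\dddot{x},\dddot{y}$; collecting coefficients of $\dddot{x},\dddot{y}$ (and then of $\ddot{x},\ddot{y}$) forces $\varepsilon_{x},\varepsilon_{y}$ to be affine in the accelerations, that is, the form \eqref{eq:LinearEpsilon}, and simultaneously converts the remaining Helmholtz identities into the reduced system \eqref{eq:HelmAB} on the coefficients $A_{\bullet},B_{\bullet\bullet}$. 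This is routine but must be carried out carefully, keeping track of $d/dt$ acting on functions of $(x,y,\dot{x},\dot{y})$ only.

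For $(c)\Rightarrow(d)$ I would invoke the homotopy (Vainberg--Tonti) construction: $\mathscr{L}_{T}$ in \eqref{eq:VT-Lagrangian} is the standard integral Lagrangian whose Euler--Lagrange form equals $\varepsilon$ precisely because the conditions \eqref{eq:HelmAB} hold; since $\varepsilon$ depends on accelerations, $\mathscr{L}_{T}$ is a priori second-order, and the subtracted total derivative in \eqref{eq:VT-1stOrder}---built from the potentials $C_{x},C_{y}$ of the $B$-coefficients---is a null Lagrangian that cancels the $\ddot{x},\ddot{y}$ dependence without altering $E_{x},E_{y}$, yielding a genuine first-order Lagrange function on $V^{1}$. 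Then $(d)\Rightarrow(a)$ is immediate. For the Lepage cycle, $(c)\Rightarrow(f)$ proceeds by writing down $\alpha_{\varepsilon}$ as in \eqref{eq:Alfa}, checking $p_{1}\alpha_{\varepsilon}=\varepsilon$ by inspection, and computing $d\alpha_{\varepsilon}$: the conditions \eqref{eq:HelmAB} are exactly what make every component of $d\alpha_{\varepsilon}$ vanish. Uniqueness and globality follow together, since prescribing $p_{1}\alpha_{\varepsilon}=\varepsilon$ and $d\alpha_{\varepsilon}=0$ determines the $2$-contact part of $\alpha_{\varepsilon}$ chartwise, so two chart expressions agree on overlaps and glue to a global form. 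Finally $(f)\Rightarrow(e)$ is obtained by setting $F_{W}=p_{2}\alpha_{\varepsilon}|_{W}$, and $(e)\Rightarrow(b)$ by expanding the local closure $d\alpha_{W}=0$ and reading off \eqref{eq:Helm} from the vanishing of its separate contact components.

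The main obstacle I expect is twofold, and both parts are tied to the closure computation for $\alpha_{\varepsilon}$. Computationally, verifying $d\alpha_{\varepsilon}=0\Leftrightarrow\eqref{eq:HelmAB}$ requires organizing $d$ on the contact basis $\{dt,\omega^{x},\omega^{y},\dot{\omega}^{x},\dot{\omega}^{y}\}$ and matching a fair number of $3$-form components; the antisymmetrization and the coupling between the $B$-terms and the $\tfrac{1}{2}(\partial A_{x}/\partial\dot{y}-\partial A_{y}/\partial\dot{x})$ term is where sign errors tend to creep in. Conceptually, the delicate point is the \emph{global} existence and uniqueness asserted in (f): one must argue that the individually chart-defined expressions \eqref{eq:Alfa} are independent of the fibered chart, which I would handle not by a direct and painful transformation-law check but through the uniqueness statement, since a closed $2$-form with prescribed $1$-contact component $\varepsilon$ is rigid enough to be intrinsic.
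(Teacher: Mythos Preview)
Your plan is sound, but you should know that the paper does not actually give a proof of this theorem: it is stated as a compilation of classical facts, with the equivalences $(a)\Leftrightarrow(b)\Leftrightarrow(c)$ attributed to the literature on Helmholtz conditions (Krupkov\'a--Prince), the Vainberg--Tonti construction $(d)$ to Tonti, condition $(e)$ to Krupka, and the global Lepage equivalent $(f)$ to Krupkov\'a. The only argument the paper sketches is for the globality of $\alpha_{\varepsilon}$, and there it takes the route you explicitly avoid: it says that ``a straightforward coordinate transformation applied to formula \eqref{eq:Alfa}'' verifies that $\alpha_{\varepsilon}$ is a global form on $\mathbb{R}\times T^{1}M$. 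Your alternative---deducing globality from chartwise uniqueness of a closed $2$-form with prescribed $1$-contact part---is cleaner and perfectly valid, since uniqueness is an intrinsic statement and forces the chart expressions to agree on overlaps; the paper's direct transformation check (which it in fact carries out later, in the proof of Lemma~\ref{lem:AlfaClosed}, via the explicit formulas \eqref{eq:A-transform}--\eqref{eq:B-transform}) buys instead concrete transformation laws for the coefficients $A_{\bullet}$, $B_{\bullet\bullet}$, which the paper then reuses in Section~3. So both approaches work; yours is more conceptual, the paper's is more computational but feeds into subsequent arguments.
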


The identities expressed by Theorem \ref{thm:LocalVariationality},
(b), or equivalently (c), are the well-known \emph{Helmholtz conditions}
of local variationality (cf. Krupkov\'a and Prince, and references
therein). Formula \eqref{eq:VT-Lagrangian} yields the \emph{Vainberg\textendash Tonti
Lagrange function} for a~locally variational source form $\varepsilon$
(see Tonti \cite{Tonti}), which is defined on $V^{2}$ and can always
be reduced to first-order Lagrange function \eqref{eq:VT-1stOrder}
on $V^{1}$. Note that the Euler\textendash Lagrange form associated
with Lagrangian \eqref{eq:VT-1stOrder} coincides with source form
$\varepsilon$, provided the Helmholtz conditions are satisfied.

A~relationship between locally variational source forms and \emph{closed}
forms was studied by Krupka \cite{Krupka-Lepage}, and is given by
Theorem \ref{thm:LocalVariationality}, (e). Theorem \ref{thm:LocalVariationality},
(f), represents\emph{ global} generalization of condition (e) due
to Krupkov\'a \cite{Krupkova1986}. A~straightforward coordinate
transformations applied to formula \eqref{eq:Alfa} verify that $\alpha$
defines a \emph{global} form on $\mathbb{R}\times T^{1}M$, and it
represents an example of a~\emph{Lepage $2$-form} in mechanics (see
also Krupkov\'a and Prince \cite{KruPri,Krupkova-Prince}). $\alpha_{\varepsilon}$
is called a \emph{Lepage equivalent} of locally variational source
form $\varepsilon$.

A~$1$-form $\vartheta$ on $W^{1}\subset\mathbb{R}\times T^{1}M$
is called a~\emph{Lepage form} (of first-order), if the contraction
$i_{\xi}d\vartheta$ is contact $1$-form for every $\pi^{1,0}$-vertical
vector field $\xi$ on $W^{1}$. In addition, if $h\vartheta=\lambda$
for a~Lagrangian $\lambda$ on $W^{1}$, $\vartheta$ is called the
\emph{Lepage equivalent} of $\lambda$. The concepts of a~Lepage
form and the~Lepage equivalent of a~Lagrangian are introduced for
finite-order jet prolongations of fibered manifolds over $n$-dimensional
basis (see Krupka \cite{Krupka-Lepage}), and also the Grassmann fibrations
(see Urban and Krupka \cite{UK}); for $n=1$ (fibered mechanics),
the Lepage equivalent of a~Lagrangian is unique. The following theorem
recalls the well-known \emph{Cartan form} $\Theta_{\lambda}$ of a
\emph{first-order} Lagrangian $\lambda$, which represents an example
of a~Lepage form. For variational principles in mechanics based on
the Cartan form and its generalizations, see Krupka, Krupkov\'a and
Saunders \cite{KKS}.
\begin{thm}
\label{thm:Cartan} Every first-order Lagrangian $\lambda\in\Omega_{1,X}^{1}W$
has a unique Lepage equivalent $\Theta_{\lambda}$. If $\lambda$
has a chart expression $\lambda=\mathscr{L}dt$, then
\begin{equation}
\Theta_{\lambda}=\mathscr{L}dt+\frac{\partial\mathscr{L}}{\partial\dot{x}}\omega^{x}+\frac{\partial\mathscr{L}}{\partial\dot{y}}\omega^{y},\label{eq:CartanForm}
\end{equation}
and
\begin{equation}
p_{1}d\Theta_{\lambda}=E_{\lambda},\label{eq:p1dTheta}
\end{equation}
where 
\begin{equation}
E_{\lambda}=\left(E_{x}(\mathscr{L})\omega^{x}+E_{y}(\mathscr{L})\omega^{y}\right)\wedge dt.\label{eq:EL-form}
\end{equation}
\end{thm}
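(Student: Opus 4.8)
The plan is to prove Theorem \ref{thm:Cartan} by establishing three separate claims: the \emph{explicit formula} \eqref{eq:CartanForm}, the \emph{existence and uniqueness} of the Lepage equivalent, and the \emph{projection identity} \eqref{eq:p1dTheta}. I would begin with existence by simply proposing the candidate $\Theta_\lambda$ in \eqref{eq:CartanForm} and verifying that it satisfies the two defining properties of a Lepage equivalent: first, $h\Theta_\lambda=\lambda$, which is immediate since $h\omega^x=h\omega^y=0$ and $h(\mathscr{L}\,dt)=\mathscr{L}\,dt$; second, that $i_\xi d\Theta_\lambda$ is contact for every $\pi^{1,0}$-vertical vector field $\xi$ on $W^1$.

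For this second property I would compute $d\Theta_\lambda$ in the chart $\psi^1=(t,x,y,\dot x,\dot y)$, expanding the exterior derivative of each term. The key observation is that, after collecting terms, the $\pi^{1,0}$-vertical directions are spanned by $\partial/\partial\dot x$ and $\partial/\partial\dot y$, and contracting $d\Theta_\lambda$ with either of these produces an expression proportional to the contact forms $\omega^x,\omega^y$. Concretely, one finds that the coefficients of the would-be non-contact (horizontal) parts of $i_{\partial/\partial\dot x}d\Theta_\lambda$ cancel precisely because of the symmetry of second partial derivatives $\partial^2\mathscr{L}/\partial\dot x\,\partial\dot x$ and $\partial^2\mathscr{L}/\partial\dot x\,\partial\dot y$. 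This is a direct but slightly tedious coordinate verification.

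For uniqueness I would invoke the general fact, referenced in the excerpt for $n=1$, that the Lepage equivalent of a first-order Lagrangian is unique; alternatively I would argue directly that if $\Theta'$ is another Lepage equivalent, then $\Theta'-\Theta_\lambda$ is contact with $h(\Theta'-\Theta_\lambda)=0$, and the Lepage condition on the difference forces its contact coefficients to vanish, using again the vertical-contraction test. Finally, for the projection identity \eqref{eq:p1dTheta}, I would take the $1$-contact component $p_1$ of $d\Theta_\lambda$ as computed above and simplify; the terms involving $d\dot x,d\dot y$ reorganize, via $\dot\omega^x=d\dot x-\ddot x\,dt$ and $\dot\omega^y=d\dot y-\ddot y\,dt$, into exactly the Euler\textendash Lagrange expressions \eqref{eq:E-L-expressions}, yielding $E_\lambda$ in the form \eqref{eq:EL-form}. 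The total time derivative $d/dt$ appearing in $E_x(\mathscr L),E_y(\mathscr L)$ emerges naturally as the formal derivative accounting for the $t,x,y,\dot x,\dot y$ dependence of $\partial\mathscr L/\partial\dot x$ and $\partial\mathscr L/\partial\dot y$.

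The main obstacle I anticipate is purely bookkeeping rather than conceptual: the computation of $p_1 d\Theta_\lambda$ requires carefully tracking which wedge products contribute to the $1$-contact component (those of the form $\omega^x\wedge dt$, $\omega^y\wedge dt$) versus the $2$-contact component (those of the form $\omega^x\wedge\omega^y$ and the $\dot\omega$-terms), and correctly identifying the total derivative $d/dt$ so that the horizontalized coefficients assemble into $E_x(\mathscr L)$ and $E_y(\mathscr L)$. Once the chart computation is organized systematically, each of the three claims follows, and the global well-definedness of $\Theta_\lambda$ is guaranteed by the general uniqueness statement, so no separate transformation check is strictly needed.
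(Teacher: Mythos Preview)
The paper does not supply a proof of Theorem~\ref{thm:Cartan}; it is stated as a recall of the classical Cartan form, with attribution to the references \cite{Krupka-Lepage,KKS,UK}, and is then used immediately in the subsequent Corollary. There is therefore nothing to compare your proposal against.

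Your plan is the standard direct verification and is correct in outline. One small inaccuracy in your sketch: the reason $i_{\partial/\partial\dot x}d\Theta_\lambda$ is contact is not ``symmetry of second partials'' but the more elementary fact that $i_{\partial/\partial\dot x}\omega^x=i_{\partial/\partial\dot x}\omega^y=i_{\partial/\partial\dot x}dt=0$. Indeed, once you observe that the terms $\mathscr L_{\dot x}\,d\dot x\wedge dt$ and $\mathscr L_{\dot y}\,d\dot y\wedge dt$ coming from $d\mathscr L\wedge dt$ cancel against $\mathscr L_{\dot x}\,d\omega^x+\mathscr L_{\dot y}\,d\omega^y$, the remaining expression
\[
d\Theta_\lambda=\mathscr L_x\,\omega^x\wedge dt+\mathscr L_y\,\omega^y\wedge dt+d\mathscr L_{\dot x}\wedge\omega^x+d\mathscr L_{\dot y}\wedge\omega^y
\]
makes both the Lepage property and the identity $p_1 d\Theta_\lambda=E_\lambda$ transparent. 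For uniqueness, your second alternative (showing a contact $1$-form $\Theta'-\Theta_\lambda=a\,\omega^x+b\,\omega^y$ with the Lepage property must have $a=b=0$) is the cleanest route and requires only contracting $d(a\,\omega^x+b\,\omega^y)$ with $\partial/\partial\dot x$ and $\partial/\partial\dot y$.
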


Combining Theorem \ref{thm:LocalVariationality}, (f), with the Lepage
equivalent property \eqref{eq:p1dTheta}, we get a~straightforward
observation for globally variational source forms. 
\begin{cor}
Let $\varepsilon$ be a source form on $\mathbb{R}\times T^{2}M$,
which is locally variational. If the equation
\begin{equation}
\alpha_{\varepsilon}=d\Theta_{\lambda}\label{eq:ExteriorEquation}
\end{equation}
has a (global) solution $\lambda\in\Omega_{1,X}^{1}\left(\mathbb{R}\times T^{1}M\right)$,
then $\varepsilon$ is also globally variational, and vice versa.
\end{cor}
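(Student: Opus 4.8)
The plan is to prove both implications by applying the $1$-contact projector $p_{1}$ to the exactness equation \eqref{eq:ExteriorEquation} and then invoking the uniqueness clause of Theorem \ref{thm:LocalVariationality}, (f). Throughout I would keep in mind two facts already established: by Theorem \ref{thm:LocalVariationality}, (f), the Lepage equivalent $\alpha_{\varepsilon}$ is the \emph{unique} closed $2$-form on $\mathbb{R}\times T^{1}M$ satisfying $p_{1}\alpha_{\varepsilon}=\varepsilon$; and by Theorem \ref{thm:Cartan} the Cartan form $\Theta_{\lambda}$ is globally well defined for a global Lagrangian $\lambda$ and satisfies $p_{1}d\Theta_{\lambda}=E_{\lambda}$ by \eqref{eq:p1dTheta}. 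Note also that on a $1$-dimensional base every $2$-form splits only into its $1$-contact and $2$-contact components, so $p_{1}$ together with $p_{2}$ exhausts the decomposition.

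For the forward implication I would suppose that \eqref{eq:ExteriorEquation} admits a global solution $\lambda\in\Omega_{1,X}^{1}(\mathbb{R}\times T^{1}M)$ and apply $p_{1}$ to both sides. Since $p_{1}$ is a chart-independent (global) operator, the left-hand side yields $p_{1}\alpha_{\varepsilon}=\varepsilon$, while the right-hand side yields $p_{1}d\Theta_{\lambda}=E_{\lambda}$ by \eqref{eq:p1dTheta}. Hence $\varepsilon=E_{\lambda}$ with $\lambda$ a globally defined first-order Lagrangian, which is exactly the assertion that $\varepsilon$ is globally variational.

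For the converse I would assume $\varepsilon$ is globally variational, so that there is a global Lagrangian $\lambda$ on $\mathbb{R}\times T^{1}M$ with $E_{\lambda}=\varepsilon$. Its Cartan form $\Theta_{\lambda}$ is then a global $1$-form by Theorem \ref{thm:Cartan}, and the $2$-form $d\Theta_{\lambda}$ is globally defined, \emph{closed} (being exact), and satisfies $p_{1}d\Theta_{\lambda}=E_{\lambda}=\varepsilon$ by \eqref{eq:p1dTheta}. Thus $d\Theta_{\lambda}$ is a closed $2$-form on $\mathbb{R}\times T^{1}M$ whose $1$-contact component equals $\varepsilon$, and the uniqueness assertion of Theorem \ref{thm:LocalVariationality}, (f), forces $d\Theta_{\lambda}=\alpha_{\varepsilon}$, which is precisely \eqref{eq:ExteriorEquation}.

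The only delicate point, and the step I would treat most carefully, is the converse: the equality $p_{1}(\alpha_{\varepsilon}-d\Theta_{\lambda})=0$ does \emph{not} by itself give $\alpha_{\varepsilon}=d\Theta_{\lambda}$, since a $2$-form on $\mathbb{R}\times T^{1}M$ is not determined by its $1$-contact component alone---its $2$-contact part could survive the projection. What rescues the argument is that \emph{both} forms are closed, so the uniqueness in Theorem \ref{thm:LocalVariationality}, (f), which is a statement restricted to closed $2$-forms, applies. Consequently I would make sure to record explicitly that $d\Theta_{\lambda}$ is genuinely closed (immediate from $d^{2}=0$) and globally defined (immediate from the uniqueness, hence chart-independence, of the Lepage equivalent $\Theta_{\lambda}$ in Theorem \ref{thm:Cartan}), as these two properties are exactly what license the final appeal to uniqueness.
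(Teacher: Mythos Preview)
Your proof is correct and follows the same approach as the paper: apply $p_{1}$ to \eqref{eq:ExteriorEquation} and combine $p_{1}\alpha_{\varepsilon}=\varepsilon$ from Theorem \ref{thm:LocalVariationality}, (f), with $p_{1}d\Theta_{\lambda}=E_{\lambda}$ from \eqref{eq:p1dTheta}. Your write-up is in fact more complete than the paper's, which records only the forward implication explicitly; your treatment of the converse via the uniqueness clause of Theorem \ref{thm:LocalVariationality}, (f), is exactly the argument the paper leaves implicit.
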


\begin{proof}
Theorem \ref{thm:LocalVariationality}, (f), assures a unique $2$-form
$\alpha_{\varepsilon}$ on $\mathbb{R}\times T^{1}M$ which is closed
and satisfies $\varepsilon=p_{1}\alpha_{\varepsilon}$. Applying the
operator $p_{1}$ onto \eqref{eq:ExteriorEquation}, we get $\varepsilon=E_{\lambda}$
for some $\lambda\in\Omega_{1,X}^{1}\left(\mathbb{R}\times T^{1}M\right)$.
\end{proof}

\section{The exactness equation for Lepage 2-form}

Let $\varepsilon$ be a locally variational source form defined on
$\mathbb{R}\times T^{2}M$, and consider the Lepage equivalent $\alpha_{\varepsilon}$
of $\varepsilon$ (Theorem \ref{thm:LocalVariationality}, (f)). The
Poincar\'e lemma implies that $\alpha_{\varepsilon}$ is locally
exact. We observe that the main problem of finding a~global Lagrangian
for $\varepsilon$ is closely related with \emph{global exactness}
\emph{of Lepage 2-form} $\alpha_{\varepsilon}$ or, in other words,
with finding a solution $\mu$ defined on $\mathbb{R}\times T^{1}M$
of the equation 
\begin{equation}
\alpha_{\varepsilon}=d\mu.\label{eq:AlfaExactness}
\end{equation}
A~solution $\mu$ of \eqref{eq:AlfaExactness} produces Lagrangian
$h\mu$\textcolor{black}{, which is }\emph{equivalent} to $\lambda=\mathscr{L}dt$
given by equation \eqref{eq:ExteriorEquation}. Indeed, if $\mu$
solves \eqref{eq:AlfaExactness} and $\alpha_{\varepsilon}=d\Theta_{\lambda}$,
then $\mu=\Theta_{\lambda}+df$, hence we get $h\mu=\lambda+h\left(df\right)=\left(\mathscr{L}+df/dt\right)dt$,
which differs from $\lambda$ by means of total derivative of a function.

Recall that equation \eqref{eq:AlfaExactness} need \textit{not} have a~global solution on $T^1M$ and, moreover, if a solution exists, there is \emph{no} construction of this solution on general $m$-dimensional manifolds; see Remark \ref{rem:Lee}.

The next lemma allows \emph{global} canonical decomposition of $\alpha_{\varepsilon}$
into closed forms.
\begin{lem}
\label{lem:AlfaClosed}Let $\alpha_{\varepsilon}$ be the Lepage equivalent
of a~locally variational source form $\varepsilon$ on $\mathbb{R}\times T^{2}M$.
Then there is a~unique decomposition of $\alpha_{\varepsilon}$ on
$\mathbb{R}\times T^{1}M$, 

\begin{equation}
\alpha_{\varepsilon}=\alpha_{0}\wedge dt+\alpha',\label{eq:AlfaDecomposition}
\end{equation}
where $\alpha_{0}$ and $\alpha'$ are closed forms defined on $T^{1}M$,
and $\alpha'$ does not contain $dt$. In a~fibered chart $\left(V,\psi\right)$,
$\psi=\left(t,x,y\right)$, on $\mathbb{R}\times M$, we have

\begin{align}
\alpha_{0} & =\left(A_{x}-\frac{1}{2}\left(\frac{\partial A_{x}}{\partial\dot{y}}-\frac{\partial A_{y}}{\partial\dot{x}}\right)\dot{y}\right)dx+\left(A_{y}-\frac{1}{2}\left(\frac{\partial A_{y}}{\partial\dot{x}}-\frac{\partial A_{x}}{\partial\dot{y}}\right)\dot{x}\right)dy\nonumber \\
 & +\left(B_{xx}\dot{x}+B_{xy}\dot{y}\right)d\dot{x}+\left(B_{xy}\dot{x}+B_{yy}\dot{y}\right)d\dot{y},\label{eq:Alfa0}
\end{align}
and
\begin{align}
\alpha' & =\frac{1}{2}\left(\frac{\partial A_{x}}{\partial\dot{y}}-\frac{\partial A_{y}}{\partial\dot{x}}\right)dx\wedge dy\label{eq:AlfaPrime}\\
 & +\left(B_{xx}dx+B_{xy}dy\right)\wedge d\dot{x}+\left(B_{xy}dx+B_{yy}dy\right)\wedge d\dot{y}.\nonumber 
\end{align}
\end{lem}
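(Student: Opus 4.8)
The plan is to prove the lemma by a direct coordinate computation that separates $\alpha_{\varepsilon}$ into its $dt$-containing and $dt$-free parts, and then to deduce closedness of the two pieces from the closedness of $\alpha_{\varepsilon}$ itself. First I would expand the chart expression \eqref{eq:Alfa} by substituting the contact forms $\omega^x = dx - \dot{x}\,dt$, $\omega^y = dy - \dot{y}\,dt$, $\dot{\omega}^x = d\dot{x} - \ddot{x}\,dt$, $\dot{\omega}^y = d\dot{y} - \ddot{y}\,dt$, so that $\alpha_{\varepsilon}$ is written purely in terms of $dt, dx, dy, d\dot{x}, d\dot{y}$. The key first observation is that the second-order variables drop out: the terms in $\ddot{x}, \ddot{y}$ produced by $\dot{\omega}^x, \dot{\omega}^y$ in the $B$-terms of \eqref{eq:Alfa} cancel exactly against the $\ddot{x}, \ddot{y}$ hidden inside $\varepsilon_x, \varepsilon_y$ through the linear dependence \eqref{eq:LinearEpsilon} of Theorem \ref{thm:LocalVariationality}(c), the cancellation of the $\omega^y\wedge dt$ coefficient being guaranteed precisely by the symmetry $B_{xy} = B_{yx}$. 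This re-derives the fact that $\alpha_{\varepsilon}$ is a genuine form on $\mathbb{R}\times T^1 M$.

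Next I would collect the surviving terms, now involving only first-order data, into those that contain the factor $dt$ and those that do not. Using $\omega^x\wedge dt = dx\wedge dt$, $\omega^x\wedge\omega^y = dx\wedge dy - \dot{y}\,dx\wedge dt + \dot{x}\,dy\wedge dt$, and $\omega^x\wedge d\dot{x} = dx\wedge d\dot{x} + \dot{x}\,d\dot{x}\wedge dt$ together with their analogues, reading off the coefficient of each basis $2$-form reproduces exactly the $1$-form $\alpha_0$ of \eqref{eq:Alfa0} as the coefficient of $dt$, and the $2$-form $\alpha'$ of \eqref{eq:AlfaPrime} as the remaining $dt$-free part. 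Because $Y = \mathbb{R}\times M$ is a product, the vector field $\partial/\partial t$ is globally defined, and this splitting is nothing but the canonical decomposition $\alpha_0 = -i_{\partial/\partial t}\alpha_{\varepsilon}$, $\alpha' = \alpha_{\varepsilon} - dt\wedge i_{\partial/\partial t}\alpha_{\varepsilon}$; this makes both pieces global and gives uniqueness at once. Since all coefficients $A_x, A_y, B_{xx}, B_{xy}, B_{yy}$ depend only on $x, y, \dot{x}, \dot{y}$ by the standing $t$-independence assumption of Section 2, and no $dt$ appears, $\alpha_0$ and $\alpha'$ descend to forms on $T^1 M$.

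Finally, closedness. Rather than verify $d\alpha_0 = 0$ and $d\alpha' = 0$ directly from the Helmholtz system \eqref{eq:HelmAB} (which is possible but laborious), I would inherit it from $\alpha_{\varepsilon}$: by Theorem \ref{thm:LocalVariationality}(f) we have $d\alpha_{\varepsilon} = 0$, hence $0 = d\alpha_{\varepsilon} = d\alpha_0\wedge dt + d\alpha'$. The decisive point is that $\alpha_0$ and $\alpha'$ are $t$-independent and free of $dt$, so their exterior derivatives are again free of $dt$; therefore $d\alpha_0\wedge dt$ accounts for the entire $dt$-containing part of $d\alpha_{\varepsilon}$ and $d\alpha'$ for the entire $dt$-free part. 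Since $dt$ is linearly independent from $dx, dy, d\dot{x}, d\dot{y}$, the two parts must vanish separately, which yields $d\alpha' = 0$ together with $d\alpha_0\wedge dt = 0$, and the latter forces $d\alpha_0 = 0$.

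The bulk of the work, and the only place where slips are likely, is the bookkeeping in the first two steps: the $\ddot{x}, \ddot{y}$ cancellation and the correct signs in the $dt$-split. There is no genuine analytic obstacle, since once the explicit forms \eqref{eq:Alfa0} and \eqref{eq:AlfaPrime} are matched, closedness is essentially free from the closedness of $\alpha_{\varepsilon}$; the crucial enabling hypothesis is the $t$-independence of the coefficients, which is exactly what prevents $d$ from coupling the two parts.
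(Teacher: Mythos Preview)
Your argument is correct and complete, but it proceeds along a different route from the paper's own proof. The paper establishes closedness of $\alpha'$ by a direct chart computation of $d\alpha'$, checking that each coefficient vanishes as a consequence of the Helmholtz identities \eqref{eq:HelmAB} (specifically \eqref{eq:Helmholtz-1} and the derived identities \eqref{eq:Helmholtz-2}); closedness of $\alpha_0$ is then inferred from $d\alpha_\varepsilon=0$. Globality of $\alpha_0$ and $\alpha'$ is likewise verified in the paper by writing out the transformation laws \eqref{eq:A-transform}, \eqref{eq:B-transform} explicitly and checking that the local expressions patch.

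Your approach replaces both of these computations with structural observations: globality and uniqueness follow immediately from the intrinsic identifications $\alpha_0=-i_{\partial/\partial t}\alpha_\varepsilon$ and $\alpha'=\alpha_\varepsilon-dt\wedge i_{\partial/\partial t}\alpha_\varepsilon$, using that $\partial/\partial t$ and $dt$ are global on the product $\mathbb{R}\times T^1M$; and closedness of both pieces follows from $d\alpha_\varepsilon=0$ together with $t$-independence, by separating the $dt$-containing and $dt$-free parts of $d\alpha_\varepsilon$. This is cleaner and avoids invoking the Helmholtz conditions a second time. What the paper's approach buys, in exchange for the extra labour, is the explicit transformation formulas \eqref{eq:A-transform} and \eqref{eq:B-transform}, which are reused later in the proofs of Lemma~\ref{lem:Eta0} and Theorem~\ref{thm:KappaGlobal}; your intrinsic argument does not produce these as a by-product.
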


\begin{proof}
Since the Lepage $2$-form $\alpha_{\varepsilon}$ is closed, it is
sufficient to show that also $\alpha'$ is closed. This can be, however,
directly verified in a~fibered chart employing the Helmholtz conditions
\eqref{eq:HelmAB}. Indeed, from \eqref{eq:AlfaPrime} we have a~chart
expression
\begin{align*}
d\alpha' & =\left(\frac{\partial B_{xy}}{\partial x}-\frac{\partial B_{xx}}{\partial y}+\frac{1}{2}\frac{\partial}{\partial\dot{x}}\left(\frac{\partial A_{x}}{\partial\dot{y}}-\frac{\partial A_{y}}{\partial\dot{x}}\right)\right)dx\wedge dy\wedge d\dot{x}\\
 & +\left(\frac{\partial B_{yy}}{\partial x}-\frac{\partial B_{xy}}{\partial y}+\frac{1}{2}\frac{\partial}{\partial\dot{y}}\left(\frac{\partial A_{x}}{\partial\dot{y}}-\frac{\partial A_{y}}{\partial\dot{x}}\right)\right)dx\wedge dy\wedge d\dot{y}\\
 & +\left(\frac{\partial B_{xx}}{\partial\dot{y}}-\frac{\partial B_{xy}}{\partial\dot{x}}\right)dx\wedge d\dot{x}\wedge d\dot{y}+\left(\frac{\partial B_{xy}}{\partial\dot{y}}-\frac{\partial B_{yy}}{\partial\dot{x}}\right)dy\wedge d\dot{x}\wedge d\dot{y},
\end{align*}
where the last two summands vanish using
\begin{equation}
\frac{\partial B_{xx}}{\partial\dot{y}}=\frac{\partial B_{xy}}{\partial\dot{x}},\quad\frac{\partial B_{yy}}{\partial\dot{x}}=\frac{\partial B_{xy}}{\partial\dot{y}},\label{eq:Helmholtz-1}
\end{equation}
and from the Helmholtz conditions \eqref{eq:HelmAB} it is easy to
see that the following identities hold
\begin{align}
 & \frac{\partial B_{xy}}{\partial x}-\frac{\partial B_{xx}}{\partial y}+\frac{1}{2}\frac{\partial}{\partial\dot{x}}\left(\frac{\partial A_{x}}{\partial\dot{y}}-\frac{\partial A_{y}}{\partial\dot{x}}\right)=0,\label{eq:Helmholtz-2}\\
 & \frac{\partial B_{yy}}{\partial x}-\frac{\partial B_{xy}}{\partial y}+\frac{1}{2}\frac{\partial}{\partial\dot{y}}\left(\frac{\partial A_{x}}{\partial\dot{y}}-\frac{\partial A_{y}}{\partial\dot{x}}\right)=0.\nonumber 
\end{align}
Hence $d\alpha'=0$, as required. It remains to show that both local
forms $\alpha_{0}$ \eqref{eq:Alfa0} and $\alpha'$ \eqref{eq:AlfaPrime}
define global forms on $\mathbb{R}\times T^{1}M$. Since $\varepsilon$
\eqref{eq:Source} is defined on $\mathbb{R}\times T^{1}M$, we get
for an arbitrary~chart transformation $x=x(\bar{x},\bar{y})$, $y=y(\bar{x},\bar{y})$,
on $M$, the relations
\begin{align}
 & \frac{\partial A_{x}}{\partial\dot{y}}-\frac{\partial A_{y}}{\partial\dot{x}}=\left(\frac{\partial A_{\bar{x}}}{\partial\dot{\bar{y}}}-\frac{\partial A_{\bar{y}}}{\partial\dot{\bar{x}}}\right)\left(\frac{\partial\bar{y}}{\partial y}\frac{\partial\bar{x}}{\partial x}-\frac{\partial\bar{x}}{\partial y}\frac{\partial\bar{y}}{\partial x}\right)\nonumber \\
 & \quad+2B_{\bar{x}\bar{x}}\left(\left(\frac{\partial^{2}\bar{x}}{\partial x\partial y}\frac{\partial\bar{x}}{\partial x}-\frac{\partial^{2}\bar{x}}{\partial x^{2}}\frac{\partial\bar{x}}{\partial y}\right)\dot{x}+\left(\frac{\partial^{2}\bar{x}}{\partial y^{2}}\frac{\partial\bar{x}}{\partial x}-\frac{\partial^{2}\bar{x}}{\partial x\partial y}\frac{\partial\bar{x}}{\partial y}\right)\dot{y}\right)\nonumber \\
 & \quad+2B_{\bar{x}\bar{y}}\left(\left(\frac{\partial^{2}\bar{x}}{\partial x\partial y}\frac{\partial\bar{y}}{\partial x}-\frac{\partial^{2}\bar{x}}{\partial x^{2}}\frac{\partial\bar{y}}{\partial y}+\frac{\partial^{2}\bar{y}}{\partial x\partial y}\frac{\partial\bar{x}}{\partial x}-\frac{\partial^{2}\bar{y}}{\partial x^{2}}\frac{\partial\bar{x}}{\partial y}\right)\dot{x}\right.\label{eq:A-transform}\\
 & \quad\quad\left.+\left(\frac{\partial^{2}\bar{x}}{\partial y^{2}}\frac{\partial\bar{y}}{\partial x}-\frac{\partial^{2}\bar{x}}{\partial x\partial y}\frac{\partial\bar{y}}{\partial y}+\frac{\partial^{2}\bar{y}}{\partial y^{2}}\frac{\partial\bar{x}}{\partial x}-\frac{\partial^{2}\bar{y}}{\partial x\partial y}\frac{\partial\bar{x}}{\partial y}\right)\dot{y}\right)\nonumber \\
 & \quad+2B_{\bar{y}\bar{y}}\left(\left(\frac{\partial^{2}\bar{y}}{\partial x\partial y}\frac{\partial\bar{y}}{\partial x}-\frac{\partial^{2}\bar{y}}{\partial x^{2}}\frac{\partial\bar{y}}{\partial y}\right)\dot{x}+\left(\frac{\partial^{2}\bar{y}}{\partial y^{2}}\frac{\partial\bar{y}}{\partial x}-\frac{\partial^{2}\bar{y}}{\partial x\partial y}\frac{\partial\bar{y}}{\partial y}\right)\dot{y}\right),\nonumber 
\end{align}
and
\begin{align}
 & B_{xx}=B_{\bar{x}\bar{x}}\left(\frac{\partial\bar{x}}{\partial x}\right)^{2}+2B_{\bar{x}\bar{y}}\frac{\partial\bar{x}}{\partial x}\frac{\partial\bar{y}}{\partial x}+B_{\bar{y}\bar{y}}\left(\frac{\partial\bar{y}}{\partial x}\right)^{2},\nonumber \\
 & B_{xy}=B_{\bar{x}\bar{x}}\frac{\partial\bar{x}}{\partial x}\frac{\partial\bar{x}}{\partial y}+B_{\bar{x}\bar{y}}\left(\frac{\partial\bar{x}}{\partial x}\frac{\partial\bar{y}}{\partial y}+\frac{\partial\bar{x}}{\partial y}\frac{\partial\bar{y}}{\partial x}\right)+B_{\bar{y}\bar{y}}\frac{\partial\bar{y}}{\partial x}\frac{\partial\bar{y}}{\partial y},\label{eq:B-transform}\\
 & B_{yy}=B_{\bar{x}\bar{x}}\left(\frac{\partial\bar{x}}{\partial y}\right)^{2}+2B_{\bar{x}\bar{y}}\frac{\partial\bar{x}}{\partial y}\frac{\partial\bar{y}}{\partial y}+B_{\bar{y}\bar{y}}\left(\frac{\partial\bar{y}}{\partial y}\right)^{2}.\nonumber 
\end{align}
With respect to the given chart transformation,
\begin{align*}
 & \frac{1}{2}\left(\frac{\partial A_{x}}{\partial\dot{y}}-\frac{\partial A_{y}}{\partial\dot{x}}\right)dx\wedge dy+\left(B_{xx}dx+B_{xy}dy\right)\wedge d\dot{x}+\left(B_{xy}dx+B_{yy}dy\right)\wedge d\dot{y}\\
 & =\frac{1}{2}\left(\frac{\partial A_{\bar{x}}}{\partial\dot{\bar{y}}}-\frac{\partial A_{\bar{y}}}{\partial\dot{\bar{x}}}\right)d\bar{x}\wedge d\bar{y}+\left(B_{\bar{x}\bar{x}}d\bar{x}+B_{\bar{x}\bar{y}}d\bar{y}\right)\wedge d\dot{\bar{x}}+\left(B_{\bar{x}\bar{y}}d\bar{x}+B_{\bar{y}\bar{y}}d\bar{y}\right)\wedge d\dot{\bar{y}},
\end{align*}
proving that $\alpha'$ given by local formula \eqref{eq:AlfaPrime}
is a~$2$-form on $\mathbb{R}\times T^{1}M$. By similar arguments,
the same is true for $\alpha_{0}$.
\end{proof}
\begin{lem}
\label{lem:Eta0}The equation $\alpha_{0}\wedge dt=d\mu_{0}$ has
always a solution $\mu_{0}$ defined on $\mathbb{R}\times T^{1}M$.
In a fibered chart $\left(V,\psi\right)$, $\psi=\left(t,x,y\right)$,
on $\mathbb{R}\times M$, $\mu_{0}$ is expressed by
\begin{align}
\mu_{0} & =-\left(A_{x}-\frac{1}{2}\left(\frac{\partial A_{x}}{\partial\dot{y}}-\frac{\partial A_{y}}{\partial\dot{x}}\right)\dot{y}\right)tdx-\left(A_{y}-\frac{1}{2}\left(\frac{\partial A_{y}}{\partial\dot{x}}-\frac{\partial A_{x}}{\partial\dot{y}}\right)\dot{x}\right)tdy\nonumber \\
 & -\left(B_{xx}\dot{x}+B_{xy}\dot{y}\right)td\dot{x}-\left(B_{xy}\dot{x}+B_{yy}\dot{y}\right)td\dot{y},\label{eq:Eta0}
\end{align}
and the horizontal component $h\mu_{0}$ of $\mu_{0}$, defined on
$\mathbb{R}\times T^{2}M$, is expressed by formula
\begin{align}
h\mu_{0} & =-\left(\varepsilon_{x}\dot{x}+\varepsilon_{y}\dot{y}\right)tdt,\label{eq:HorEta0}
\end{align}
where $\varepsilon_{x}$, $\varepsilon_{y}$ are given by \eqref{eq:LinearEpsilon}.
\end{lem}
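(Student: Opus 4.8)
The plan is to exploit the structure established in Lemma \ref{lem:AlfaClosed}, namely that $\alpha_0$ is a \emph{closed} $1$-form, globally defined on $T^1M$ and containing no $dt$. Under these circumstances the natural candidate for a primitive of $\alpha_0\wedge dt$ is the ansatz
\begin{equation*}
\mu_0=-t\,\alpha_0,
\end{equation*}
where $t$ is the canonical global coordinate on the $\mathbb{R}$-factor and $\alpha_0$ is regarded, via pull-back, as a form on $\mathbb{R}\times T^1M$.

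First I would verify that this ansatz solves the exactness equation. Using the Leibniz rule,
\begin{equation*}
d\mu_0=-dt\wedge\alpha_0-t\,d\alpha_0=-dt\wedge\alpha_0=\alpha_0\wedge dt,
\end{equation*}
where the middle term drops out precisely because $d\alpha_0=0$ by Lemma \ref{lem:AlfaClosed}. Substituting the chart expression \eqref{eq:Alfa0} for $\alpha_0$ and multiplying by $-t$ reproduces formula \eqref{eq:Eta0} verbatim. Global existence then needs no further argument: since $\alpha_0$ is a globally defined $1$-form on $T^1M$ and $t$ is a globally defined function, the product $-t\,\alpha_0$ is automatically a global $1$-form on $\mathbb{R}\times T^1M$. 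This is the decisive point of the lemma---no Poincar\'e-type patching and no cohomological obstruction enter, because closedness of $\alpha_0$ alone forces the naive primitive to succeed. (This is exactly where the $dt$-free, closed summand of the canonical decomposition pays off, in contrast to $\alpha'$, whose exactness is the genuinely hard part treated later.)

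For the horizontal component I would apply the horizontalization $h$ to $\mu_0=-t\,\alpha_0$ directly, using the substitution rules $h\,dx=\dot x\,dt$, $h\,dy=\dot y\,dt$, $h\,d\dot x=\ddot x\,dt$, $h\,d\dot y=\ddot y\,dt$. The antisymmetric contributions carrying the coefficient $\tfrac12(\partial A_x/\partial\dot y-\partial A_y/\partial\dot x)$ cancel pairwise, since the $dx$-term produces $-\tfrac12 C\,\dot x\dot y$ and the $dy$-term produces $+\tfrac12 C\,\dot x\dot y$ with the common scalar $C=\partial A_x/\partial\dot y-\partial A_y/\partial\dot x$. What survives is
\begin{equation*}
h\mu_0=-t\bigl(A_x\dot x+A_y\dot y+(B_{xx}\dot x+B_{xy}\dot y)\ddot x+(B_{xy}\dot x+B_{yy}\dot y)\ddot y\bigr)\,dt,
\end{equation*}
and the bracket is recognized as $\varepsilon_x\dot x+\varepsilon_y\dot y$ upon inserting the linear-in-acceleration form \eqref{eq:LinearEpsilon} of the source coefficients and using $B_{xy}=B_{yx}$. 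This delivers \eqref{eq:HorEta0}. There is essentially no obstacle here: the only points requiring care are confirming the pairwise cancellation in the horizontal projection and matching the remaining bracket against the source coefficients, both of which are elementary algebra.
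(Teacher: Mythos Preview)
Your argument is correct and in fact cleaner than the paper's. The paper does not make the identification $\mu_0=-t\,\alpha_0$ explicit; instead it verifies that \eqref{eq:Eta0} is globally defined by rerunning the chart-transformation computation (invoking the Helmholtz conditions and the transformation rules \eqref{eq:A-transform}, \eqref{eq:B-transform}), and then checks $d\mu_0=\alpha_0\wedge dt$ by direct coordinate calculation. Your route bypasses both steps: once you notice $\mu_0=-t\,\alpha_0$, globality is inherited from the globality of $\alpha_0$ already established in Lemma~\ref{lem:AlfaClosed}, and the exactness equation follows in one line from $d\alpha_0=0$. The horizontal-component computation is the same in spirit for both. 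What your approach buys is economy and transparency---it shows exactly why this piece of the decomposition is the easy one---while the paper's version is self-contained in the sense that it does not require the reader to recall that $\alpha_0$ was proved closed and global.
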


\begin{proof}
Using the Helmholtz conditions \eqref{eq:HelmAB} and formulas \eqref{eq:A-transform},
\eqref{eq:B-transform}, the chart transformation shows that $\mu_{0}$
given by \eqref{eq:Eta0} defines a $1$-form on $\mathbb{R}\times T^{1}M$.
To verify that $\mu_{0}$ satisfies $\alpha_{0}\wedge dt=d\mu_{0}$,
we proceed by a straightforward calculation in a~chart.
\end{proof}
Now we analyze the equation $\alpha'=d\mu'$, where $\alpha'$ is
given by formula \eqref{eq:AlfaPrime}.\textcolor{black}{{} To this
purpose we define canonical }\textit{\textcolor{black}{local}}\textcolor{black}{{}
sections as follows. Let $\left(V,\psi\right)$, $\psi=\left(t,x,y\right)$,
be a~fixed fibered chart on $\mathbb{R}\times M$, and $\left(V^{1},\psi^{1}\right)$,
$\psi^{1}=\left(t,x,y,\dot{x},\dot{y}\right)$, be the associated
chart on $\mathbb{R}\times T^{1}M$. Put
\[
\pi_{1}^{1}\left(t,x,y,\dot{x},\dot{y}\right)=\left(t,x,y,\dot{y}\right),\quad s_{1,\nu}^{1}\left(t,x,y,\dot{y}\right)=\left(t,x,y,\nu,\dot{y}\right),
\]
and
\[
\pi_{2}^{1}\left(t,x,y,\dot{y}\right)=\left(t,x,y\right),\quad s_{2,\sigma}^{1}\left(t,x,y\right)=\left(t,x,y,\sigma\right).
\]
$\pi_{1}^{1}$ maps the chart domain $V^{1}$ onto its open subset
$V_{1}^{1}$, given by the equation $\dot{x}=0$, whereas $\pi_{2}^{1}$
maps $V_{1}^{1}$ onto the chart domain $V$ in $\mathbb{R}\times M$. }

\textcolor{black}{Let $K_{1}$ be a~local homotopy operator, acting
on (local) differential forms on $V^{1}\subset\mathbb{R}\times T^{1}M$
as
\begin{equation}
K_{1}\rho=\intop_{0}^{\dot{x}}\left(\pi_{1}^{1}\right)^{*}\left(s_{1,\nu}^{1}\right)^{*}\left(i_{\frac{\partial}{\partial\dot{x}}}\rho\right)d\nu,\label{eq:K1}
\end{equation}
and $K_{2}$ acts on forms on $V_{1}^{1}\subset V^{1}$ as
\begin{equation}
K_{2}\varrho=\intop_{0}^{\dot{y}}\left(\pi_{2}^{1}\right)^{*}\left(s_{2,\sigma}^{1}\right)^{*}\left(i_{\frac{\partial}{\partial\dot{y}}}\varrho\right)d\sigma,\label{eq:K2}
\end{equation}
where in formulas \eqref{eq:K1}, \eqref{eq:K2}, the integration
operation is applied on coefficients of the coressponding differential
form.}
\begin{thm}
\label{thm:K}Let $\alpha_{\varepsilon}$ be the Lepage equivalent
of $\varepsilon$, and $\alpha'$ is $2$-form on $T^{1}M$ given
by means of \eqref{eq:AlfaPrime}. If\textcolor{black}{{} $\left(V,\psi\right)$,
$\psi=\left(t,x,y\right)$, is a~fibered chart on $\mathbb{R}\times M$,
then}
\begin{equation}
\alpha'-\omega=d\kappa,\label{eq:Keq}
\end{equation}
where
\begin{equation}
\omega=\frac{1}{2}\left(\frac{\partial A_{x}}{\partial\dot{y}}-\frac{\partial A_{y}}{\partial\dot{x}}\right)_{\left(x,y,0,0\right)}dx\wedge dy,\label{eq:omega}
\end{equation}
and
\begin{align}
\kappa & =K_{1}\alpha'+\left(\pi_{1}^{1}\right)^{*}K_{2}\left(\left(s_{1,0}^{1}\right)^{*}\alpha'\right)\label{eq:kappa}\\
 & =- \left( \intop_{0}^{\dot{x}}B_{xx}\left(x,y,\nu,\dot{y}\right)d\nu + \intop_{0}^{\dot{y}}B_{xy}\left(x,y,0,\sigma\right)d\sigma \right) dx \nonumber \\
 &- \left( \intop_{0}^{\dot{x}}B_{xy}\left(x,y,\nu,\dot{y}\right)d\nu + \intop_{0}^{\dot{y}}B_{yy}\left(x,y,0,\sigma\right)d\sigma \right) dy.\nonumber 
\end{align}
\end{thm}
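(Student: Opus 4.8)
The plan is to verify equation \eqref{eq:Keq} by direct computation in the fixed fibered chart, exploiting the explicit form \eqref{eq:AlfaPrime} of $\alpha'$ together with the homotopy-operator formulas \eqref{eq:K1}, \eqref{eq:K2}. The essential mechanism is a two-step homotopy: $K_1$ integrates out the $\dot{x}$-dependence while fixing $(x,y,\dot{y})$, and $K_2$ integrates out the remaining $\dot{y}$-dependence on the slice $\dot{x}=0$. Since $\alpha'$ is closed (Lemma \ref{lem:AlfaClosed}) and contains no $dt$, each operator produces, via a fiberwise Poincaré-type identity, a primitive modulo the boundary term evaluated at the base point of the homotopy; the ``leftover'' base-point term is precisely the closed $2$-form $\omega$ in \eqref{eq:omega}.

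First I would apply $K_1$ to $\alpha'$. Contracting \eqref{eq:AlfaPrime} with $\partial/\partial\dot{x}$ kills the $dx\wedge dy$ summand and the terms paired with $d\dot{y}$, leaving $i_{\partial/\partial\dot{x}}\alpha'=-(B_{xx}dx+B_{xy}dy)$; integrating in $\nu$ from $0$ to $\dot{x}$ along the section $s^1_{1,\nu}$ yields exactly the first integrand appearing in \eqref{eq:kappa}, namely $-\bigl(\int_0^{\dot{x}}B_{xx}\,d\nu\bigr)dx-\bigl(\int_0^{\dot{x}}B_{xy}\,d\nu\bigr)dy$, with $B_{xx},B_{xy}$ evaluated at $(x,y,\nu,\dot{y})$. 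Next I would compute the pullback $(s^1_{1,0})^{*}\alpha'$ to the slice $\dot{x}=0$ (so $d\dot{x}=0$ there) and apply $K_2$: contracting with $\partial/\partial\dot{y}$ and integrating in $\sigma$ from $0$ to $\dot{y}$ produces the second integrands $-\bigl(\int_0^{\dot{y}}B_{xy}(x,y,0,\sigma)\,d\sigma\bigr)dx-\bigl(\int_0^{\dot{y}}B_{yy}(x,y,0,\sigma)\,d\sigma\bigr)dy$. Summing these two contributions gives $\kappa$ as displayed, confirming the second equality in \eqref{eq:kappa}.

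The heart of the argument is then to show $d\kappa=\alpha'-\omega$. For this I would invoke the standard homotopy identity in the form $d K_1\rho + K_1 d\rho = \rho - (\pi_1^1)^{*}(s^1_{1,0})^{*}\rho$ for the first variable, and the analogous identity for $K_2$ on the slice. Since $d\alpha'=0$, the $K_1$ identity gives $dK_1\alpha'=\alpha'-(\pi_1^1)^{*}(s^1_{1,0})^{*}\alpha'$; applying the $K_2$ identity to the residual $2$-form $(s^1_{1,0})^{*}\alpha'$ (which is closed on $V^1_1$, being a pullback of a closed form) accounts for the $d$ of the second summand of $\kappa$ and reduces the residue further to its value at $\dot{y}=0$. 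What survives after both reductions is the restriction of $\alpha'$ to $\dot{x}=\dot{y}=0$; from \eqref{eq:AlfaPrime} the $d\dot{x}$ and $d\dot{y}$ pieces then drop out and only the $dx\wedge dy$ term remains, evaluated at $(x,y,0,0)$, which is exactly $\omega$ in \eqref{eq:omega}. Collecting, $d\kappa=\alpha'-\omega$, as claimed.

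The main obstacle I anticipate is bookkeeping the base-point residues correctly rather than any deep difficulty. Because the two homotopies are chained (the second acts on the $\dot{x}=0$ pullback of the first's residue), one must be careful that the derivative $d$ in the $(x,y)$ directions commutes with the $\nu$- and $\sigma$-integrations and that the cross term $\partial/\partial\dot{y}$ acting on the $K_1$-output does not contribute spuriously; the symmetry $B_{xy}=B_{yx}$ from \eqref{eq:HelmAB} and the closedness relations \eqref{eq:Helmholtz-1}, \eqref{eq:Helmholtz-2} established in Lemma \ref{lem:AlfaClosed} guarantee that the mixed $\dot{x}\dot{y}$ contributions cancel, leaving precisely the clean residue $\omega$. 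The computation is local and routine once these cancellations are organized.
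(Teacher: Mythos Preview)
Your proposal is correct and follows the same two-step fiberwise homotopy scheme as the paper: apply $K_1$ to strip the $\dot{x}$-dependence, then $K_2$ on the slice $\dot{x}=0$ to strip $\dot{y}$, leaving the pullback of $\alpha'$ to the zero section, which is $\omega$. Your explicit computation of $\kappa$ matches the paper's formula exactly.

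The one methodological difference is in how the key identity $dK_{1}\alpha'=\alpha'-(\pi_{1}^{1})^{*}(s_{1,0}^{1})^{*}\alpha'$ is established. You invoke the abstract chain-homotopy formula $dK_{1}+K_{1}d=\mathrm{id}-(\pi_{1}^{1})^{*}(s_{1,0}^{1})^{*}$ and use $d\alpha'=0$ from Lemma~\ref{lem:AlfaClosed}, so the Helmholtz conditions enter only once, through closedness. The paper instead verifies this identity by a direct coordinate computation of $d(K_{1}\alpha')$, re-invoking the Helmholtz relations \eqref{eq:Helmholtz-1} and \eqref{eq:Helmholtz-2} to evaluate the $\nu$-integrals. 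Your route is shorter and more conceptual; the paper's is more self-contained for a reader who has not seen the fiberwise Poincar\'e homotopy. In particular, your closing worry about ``mixed $\dot{x}\dot{y}$ contributions'' cancelling is unnecessary in your own framework: once you use the abstract identity and closedness, there is nothing left to cancel by hand.
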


\begin{proof}
First, we prove the identity
\begin{equation}
\alpha'-\left(\pi_{1}^{1}\right)^{*}\left(s_{1,0}^{1}\right)^{*}\alpha'=d\left(K_{1}\alpha'\right).\label{eq:K1eq}
\end{equation}
Using \eqref{eq:AlfaPrime}, the left-hand side of \eqref{eq:K1eq}
is expressed as
\begin{align*}
 & \alpha'-\left(\pi_{1}^{1}\right)^{*}\left(s_{1,0}^{1}\right)^{*}\alpha'\\
 & =\frac{1}{2}\left(\frac{\partial A_{x}}{\partial\dot{y}}-\frac{\partial A_{y}}{\partial\dot{x}}\right)dx\wedge dy+\left(B_{xx}dx+B_{xy}dy\right)\wedge d\dot{x}+\left(B_{xy}dx+B_{yy}dy\right)\wedge d\dot{y}\\
 & -\frac{1}{2}\left(\frac{\partial A_{x}}{\partial\dot{y}}-\frac{\partial A_{y}}{\partial\dot{x}}\right)_{\left(x,y,0,\dot{y}\right)}dx\wedge dy-\left(B_{xy}\left(x,y,0,\dot{y}\right)dx+B_{yy}\left(x,y,0,\dot{y}\right)dy\right)\wedge d\dot{y}.
\end{align*}
From the definition of $K_{1}$ \eqref{eq:K1}, the right-hand side
of \eqref{eq:K1eq} reads
\begin{align*}
 & d\left(K_{1}\alpha'\right)=d\left(\intop_{0}^{\dot{x}}\left(\pi_{1}^{1}\right)^{*}\left(s_{1,\nu}^{1}\right)^{*}\left(i_{\frac{\partial}{\partial\dot{x}}}\alpha'\right)d\nu\right)\\
 & =\left(\intop_{0}^{\dot{x}}\left(\frac{\partial B_{xx}}{\partial y}-\frac{\partial B_{xy}}{\partial x}\right)_{\left(x,y,\nu,\dot{y}\right)}d\nu\right)dx\wedge dy+\left(B_{xx}dx+B_{xy}dy\right)\wedge d\dot{x}\\
 & -\left(\intop_{0}^{\dot{x}}\frac{\partial B_{xx}}{\partial\dot{y}}_{\left(x,y,\nu,\dot{y}\right)}d\nu\right)d\dot{y}\wedge dx-\left(\intop_{0}^{\dot{x}}\frac{\partial B_{xy}}{\partial\dot{y}}_{\left(x,y,\nu,\dot{y}\right)}d\nu\right)d\dot{y}\wedge dy.
\end{align*}
Now, we apply the Helmholtz conditions \eqref{eq:HelmAB} to $d\left(K_{1}\alpha'\right)$.
Namely, the identities \eqref{eq:Helmholtz-1}, and
\[
\frac{\partial B_{xx}}{\partial y}-\frac{\partial B_{xy}}{\partial x}=\frac{1}{2}\frac{\partial}{\partial\dot{x}}\left(\frac{\partial A_{x}}{\partial\dot{y}}-\frac{\partial A_{y}}{\partial\dot{x}}\right),
\]
imply that
\begin{align*}
 & d\left(K_{1}\alpha'\right)\\
 & =\frac{1}{2}\left(\intop_{0}^{\dot{x}}\frac{\partial}{\partial\dot{x}}\left(\frac{\partial A_{x}}{\partial\dot{y}}-\frac{\partial A_{y}}{\partial\dot{x}}\right)_{\left(x,y,\nu,\dot{y}\right)}d\nu\right)dx\wedge dy+\left(B_{xx}dx+B_{xy}dy\right)\wedge d\dot{x}\\
 & -\left(\intop_{0}^{\dot{x}}\frac{\partial B_{xy}}{\partial\dot{x}}_{\left(x,y,\nu,\dot{y}\right)}d\nu\right)d\dot{y}\wedge dx-\left(\intop_{0}^{\dot{x}}\frac{\partial B_{yy}}{\partial\dot{x}}_{\left(x,y,\nu,\dot{y}\right)}d\nu\right)d\dot{y}\wedge dy\\
 & =\frac{1}{2}\left(\frac{\partial A_{x}}{\partial\dot{y}}-\frac{\partial A_{y}}{\partial\dot{x}}\right)dx\wedge dy-\frac{1}{2}\left(\frac{\partial A_{x}}{\partial\dot{y}}-\frac{\partial A_{y}}{\partial\dot{x}}\right)_{\left(x,y,0,\dot{y}\right)}dx\wedge dy\\
 & +\left(B_{xx}dx+B_{xy}dy\right)\wedge d\dot{x}+\left(B_{xy}dx+B_{yy}dy\right)\wedge d\dot{y}\\
 & -\left(B_{xy}\left(x,y,0,\dot{y}\right)dx+B_{yy}\left(x,y,0,\dot{y}\right)dy\right)\wedge d\dot{y},
\end{align*}
as required to show \eqref{eq:K1eq}.

By similar arguments we observe that the following formula holds
\[
\left(s_{1,0}^{1}\right)^{*}\alpha'=\left(\pi_{2}^{1}\right)^{*}\left(s_{2,0}^{1}\right)^{*}\left(s_{1,0}^{1}\right)^{*}\alpha'+d\left(K_{2}\left(\left(s_{1,0}^{1}\right)^{*}\alpha'\right)\right).
\]
Hence
\begin{align*}
\left(\pi_{1}^{1}\right)^{*}\left(s_{1,0}^{1}\right)^{*}\alpha' & =\left(s_{1,0}^{1}\circ s_{2,0}^{1}\circ\pi_{2}^{1}\circ\pi_{1}^{1}\right)^{*}\alpha'+d\left(\left(\pi_{1}^{1}\right)^{*}K_{2}\left(\left(s_{1,0}^{1}\right)^{*}\alpha'\right)\right)\\
 & =\omega+d\left(\left(\pi_{1}^{1}\right)^{*}K_{2}\left(\left(s_{1,0}^{1}\right)^{*}\alpha'\right)\right)
\end{align*}
and substituting this formula into \eqref{eq:K1eq}, we get the identity
\eqref{eq:Keq}.
\end{proof}
\begin{rem}
The identity \eqref{eq:Keq} is formulated by Theorem \ref{thm:K}
in an arbitrary chart. By means of chart transformations, we show
that this formula holds also \emph{globally}. However, we emphasize
that even if $\omega$ \eqref{eq:omega} defines a~differential form
on $T^{1}M$, this need \emph{not} be longer true in general for a~solution
$\kappa$ of the equation \eqref{eq:Keq}. The well-known example
of a~differential form with an analogous property is the \emph{Cartan
form} $\Theta_{\lambda}$, which depends on the choice of a~Lagrangian
$\lambda$ whereas $d\Theta_{\lambda}$ does not.
\end{rem}

\begin{thm}
\label{thm:KappaGlobal}Both $\kappa$ \eqref{eq:kappa} and $\omega$
\eqref{eq:omega} define (global) differential $1$-forms on $T^{1}M$.
\end{thm}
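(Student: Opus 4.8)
The plan is to verify directly that the chart formulas for $\omega$ and $\kappa$ are invariant under a change of fibered chart $(x,y)\mapsto(\bar{x},\bar{y})$ on $M$, using the transformation laws \eqref{eq:A-transform}, \eqref{eq:B-transform} together with the Helmholtz conditions \eqref{eq:HelmAB}, \eqref{eq:Helmholtz-1}. The form $\omega$ is disposed of quickly: specialising \eqref{eq:A-transform} to $\dot{x}=\dot{y}=0$, every summand on the right-hand side carrying a factor $B_{\bar{x}\bar{x}}$, $B_{\bar{x}\bar{y}}$ or $B_{\bar{y}\bar{y}}$ is proportional to $\dot{x}$ or $\dot{y}$ and drops out, leaving
\begin{equation*}
\left(\frac{\partial A_{x}}{\partial\dot{y}}-\frac{\partial A_{y}}{\partial\dot{x}}\right)_{(x,y,0,0)}=\left(\frac{\partial A_{\bar{x}}}{\partial\dot{\bar{y}}}-\frac{\partial A_{\bar{y}}}{\partial\dot{\bar{x}}}\right)_{(\bar{x},\bar{y},0,0)}J,
\end{equation*}
with $J=(\partial\bar{x}/\partial x)(\partial\bar{y}/\partial y)-(\partial\bar{x}/\partial y)(\partial\bar{y}/\partial x)$ the Jacobian of the point transformation. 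Since $d\bar{x}\wedge d\bar{y}=J\,dx\wedge dy$, the factor $J$ cancels and the two chart expressions \eqref{eq:omega} for $\omega$ coincide; thus $\omega$ is globally defined, its coefficient depending on the base point $(x,y)$ only.

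The form $\kappa$ is the substantive part. Writing $\kappa=-P_{x}\,dx-P_{y}\,dy$ with $P_{x},P_{y}$ the bracketed integrals in \eqref{eq:kappa}, I would prove that $(P_{x},P_{y})$ transforms as a covector on $M$. The key is to characterise these coefficients without reference to the chart-dependent integration paths. Differentiating the integrals and using \eqref{eq:Helmholtz-1} exactly as in the proof of Theorem \ref{thm:K} (for instance $\partial P_{x}/\partial\dot{y}=\intop_{0}^{\dot{x}}(\partial B_{xx}/\partial\dot{y})\,d\nu+B_{xy}(x,y,0,\dot{y})=\intop_{0}^{\dot{x}}(\partial B_{xy}/\partial\dot{x})\,d\nu+B_{xy}(x,y,0,\dot{y})=B_{xy}$), one obtains
\begin{equation*}
\frac{\partial P_{x}}{\partial\dot{x}}=B_{xx},\quad\frac{\partial P_{x}}{\partial\dot{y}}=B_{xy},\quad\frac{\partial P_{y}}{\partial\dot{x}}=B_{xy},\quad\frac{\partial P_{y}}{\partial\dot{y}}=B_{yy},
\end{equation*}
together with $P_{x}(x,y,0,0)=P_{y}(x,y,0,0)=0$. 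As $B_{ij}$ is symmetric and \eqref{eq:Helmholtz-1} are precisely the mixed-partial conditions ensuring integrability of this first-order system in the fibre variables $(\dot{x},\dot{y})$, the pair $P_{x},P_{y}$ is its \emph{unique} normalised solution. Writing $x^{1}=x$, $x^{2}=y$ (with summation over repeated indices implied) and putting $Q_{i}:=\bar P_{\bar{k}}\,\partial\bar{x}^{k}/\partial x^{i}$, I would then verify that $Q_{1},Q_{2}$ obey the same characterisation: the normalisation survives because the induced velocity transformation $\dot{\bar{x}}^{l}=(\partial\bar{x}^{l}/\partial x^{m})\dot{x}^{m}$ is fibrewise linear and sends the zero velocity to the zero velocity, while the identical four relations in the barred chart give $\partial\bar P_{\bar{k}}/\partial\dot{\bar{x}}^{l}=B_{\bar{k}\bar{l}}$, so that by the chain rule and the tensorial law \eqref{eq:B-transform}, $\partial Q_{i}/\partial\dot{x}^{j}=B_{\bar{k}\bar{l}}(\partial\bar{x}^{k}/\partial x^{i})(\partial\bar{x}^{l}/\partial x^{j})=B_{ij}$. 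Uniqueness then forces $P_{i}=Q_{i}$, which is the covector law $P_{x}\,dx+P_{y}\,dy=\bar P_{\bar{x}}\,d\bar{x}+\bar P_{\bar{y}}\,d\bar{y}$; hence $\kappa=\bar\kappa$ on overlaps and $\kappa$ is a global $1$-form on $T^{1}M$.

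The main obstacle is conceptual rather than computational. The homotopy operators $K_{1},K_{2}$ sweep the velocity fibre along a chart-adapted broken path\textemdash first along $\dot{x}$ at fixed $\dot{y}$, then along $\dot{y}$ at $\dot{x}=0$\textemdash and this path is \emph{not} carried to the analogous barred path by the linear velocity transformation, so a direct change of variables in the integrals of \eqref{eq:kappa} does not close up. This is exactly the failure of invariance warned about in the preceding remark. The way around it is that the Helmholtz conditions render the fibre integral path-independent; recasting $P_{x},P_{y}$ as the unique solution of the closed first-order system above sidesteps the path mismatch altogether and reduces global well-definedness to the tensoriality \eqref{eq:B-transform} of $B_{ij}$. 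One could instead substitute \eqref{eq:B-transform} into the integrals and change variables by brute force, but the uniqueness argument is both shorter and more transparent.
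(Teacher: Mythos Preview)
Your proof is correct and rests on the same key observations as the paper's: that the velocity-fibre integration defining $\kappa$ is path-independent precisely because of the Helmholtz conditions \eqref{eq:Helmholtz-1}, and that the linear velocity change $\dot{\bar{x}}^{l}=(\partial\bar{x}^{l}/\partial x^{m})\dot{x}^{m}$ fixes the zero section. The execution, however, differs. The paper substitutes \eqref{eq:B-transform} directly into the integrals of \eqref{eq:kappa}, performs the change of variables explicitly, and recognises the outcome as a \emph{line} integral over a transformed broken path in the barred velocity plane; it then invokes path-independence to straighten that path back to the standard $(\dot{\bar{x}},\dot{\bar{y}})$-rectangular one, recovering $\bar{\kappa}$. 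You instead recast path-independence as a uniqueness statement: $P_{i}$ is the unique fibre primitive of $B_{ij}$ vanishing at the zero section, and since the pulled-back $Q_{i}=\bar{P}_{\bar{k}}\,\partial\bar{x}^{k}/\partial x^{i}$ satisfies the same characterisation (via the chain rule and the tensor law \eqref{eq:B-transform}), the covector transformation follows immediately. Your route sidesteps the explicit line-integral bookkeeping and is arguably more transparent; the paper's route has the virtue of making the geometric picture of the path deformation visible. Both arguments are the same idea at heart, and you correctly identify the paper's approach as the ``brute force'' alternative in your closing remark.
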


\begin{proof}
As for the form $\omega$, the transformation property \eqref{eq:A-transform}
shows that \eqref{eq:omega} defines a~$2$-form on $T^{1}M\subset\mathbb{R}\times T^{1}M$.
We now prove that also $\kappa$, given in an arbitrary fibered chart
by means of the formula \eqref{eq:kappa}, is a~global form. To this
purpose, consider two overlapping fibered charts on $T^{1}M$ with
coordinates functions $\left(x,y,\dot{x},\dot{y}\right)$ and $\left(\bar{x},\bar{y},\dot{\bar{x}},\dot{\bar{y}}\right)$,
and the coordinate transformation $\Psi\circ\bar{\Psi}^{-1}\left(\bar{x},\bar{y},\dot{\bar{x}},\dot{\bar{y}}\right)=\left(x\left(\bar{x},\bar{y}\right),y\left(\bar{x},\bar{y}\right),\dot{x}\left(\bar{x},\bar{y},\dot{\bar{x}},\dot{\bar{y}}\right),\dot{y}\left(\bar{x},\bar{y},\dot{\bar{x}},\dot{\bar{y}}\right)\right)$.
Putting
\begin{align*}
 & f\left(x,y,\dot{x},\dot{y}\right)=-\intop_{0}^{\dot{x}}B_{xx}\left(x,y,\nu,\dot{y}\right)d\nu-\intop_{0}^{\dot{y}}B_{xy}\left(x,y,0,\sigma\right)d\sigma,\\
 & g\left(x,y,\dot{x},\dot{y}\right)=-\intop_{0}^{\dot{x}}B_{xy}\left(x,y,\nu,\dot{y}\right)d\nu-\intop_{0}^{\dot{y}}B_{yy}\left(x,y,0,\sigma\right)d\sigma,
\end{align*}
we get an expression of $\kappa$ by means of the coordinates $\left(x,y,\dot{x},\dot{y}\right)$
as
\begin{equation}
\kappa=f\left(x,y,\dot{x},\dot{y}\right)dx+g\left(x,y,\dot{x},\dot{y}\right)dy.\label{eq:Kappa-puvodni}
\end{equation}
{\small{}}%
Using the change of variables theorem for integrals, formulas \eqref{eq:B-transform},
and the transformation described by
\begin{align*}
 & \left(\bar{\Psi}\circ\Psi^{-1}\right)\left(x,y,\nu,\dot{y}\right)=\left(\bar{x},\bar{y},\bar{\nu},\bar{\mu}\right),\quad\bar{x}=\bar{x}\left(x,y\right),\quad\bar{y}=\bar{y}\left(x,y\right),\\
 & \bar{\nu}=\frac{\partial\bar{x}}{\partial x}\nu+\frac{\partial\bar{x}}{\partial y}\dot{y},\quad\bar{\mu}=\frac{\partial\bar{y}}{\partial x}\nu+\frac{\partial\bar{y}}{\partial y}\dot{y},\quad\nu=\frac{\partial x}{\partial\bar{x}}\bar{\nu}+\frac{\partial x}{\partial\bar{y}}\bar{\mu},\quad\dot{y}=\frac{\partial y}{\partial\bar{x}}\bar{\nu}+\frac{\partial y}{\partial\bar{y}}\bar{\mu},\\
 & \left(\bar{\Psi}\circ\Psi^{-1}\right)\left(x,y,0,\sigma\right)=\left(\bar{x},\bar{y},\bar{\tau},\bar{\sigma}\right),\\
 & \bar{\tau}=\frac{\partial\bar{x}}{\partial y}\sigma,\quad\bar{\sigma}=\frac{\partial\bar{y}}{\partial y}\sigma,\quad0=\frac{\partial x}{\partial\bar{x}}\bar{\tau}+\frac{\partial x}{\partial\bar{y}}\bar{\sigma},\quad\sigma=\frac{\partial y}{\partial\bar{x}}\bar{\tau}+\frac{\partial y}{\partial\bar{y}}\bar{\sigma},
\end{align*}
and
\[
\frac{\partial y}{\partial\bar{x}}d\bar{\nu}+\frac{\partial y}{\partial\bar{y}}d\bar{\mu}=0,\quad\frac{\partial x}{\partial\bar{x}}d\bar{\tau}+\frac{\partial x}{\partial\bar{y}}d\bar{\sigma}=0,
\]
the integral \eqref{eq:Kappa-puvodni} over segments transforms into
the \emph{line} integral
\begin{align*}
 & \left(\Psi\circ\bar{\Psi}^{-1}\right)^{*}\left(f\left(x,y,\dot{x},\dot{y}\right)dx+g\left(x,y,\dot{x},\dot{y}\right)dy\right)\\
 & =-\intop_{\bar{\nu}=0,\bar{\mu}=0}^{\bar{\nu}=\dot{\bar{x}},\bar{\mu}=\dot{\bar{y}}}\left(B_{\bar{x}\bar{x}}\left(\bar{x},\bar{y},\bar{\nu},\bar{\mu}\right)d\bar{\nu}+B_{\bar{x}\bar{y}}\left(\bar{x},\bar{y},\bar{\nu},\bar{\mu}\right)d\bar{\mu}\right)\cdot d\bar{x}\\
 & -\intop_{\bar{\nu}=0,\bar{\mu}=0}^{\bar{\nu}=\dot{\bar{x}},\bar{\mu}=\dot{\bar{y}}}\left(B_{\bar{x}\bar{y}}\left(\bar{x},\bar{y},\bar{\nu},\bar{\mu}\right)d\bar{\nu}+B_{\bar{y}\bar{y}}\left(\bar{x},\bar{y},\bar{\nu},\bar{\mu}\right)d\bar{\mu}\right)\cdot d\bar{y},
\end{align*}
Now we observe that both summands in the previous expression of $\left(\Psi\circ\bar{\Psi}^{-1}\right)^{*}\kappa$
represent line integrals that are \emph{independent} upon the choice
of a~path as the integrands satisfy the \emph{Helmholtz conditions}
\eqref{eq:Helmholtz-1}, cf. Theorem \ref{thm:LocalVariationality}.
To this purpose we may consider, without loss of generality, rectangular
charts. Hence
\begin{align*}
 & \left(\Psi\circ\bar{\Psi}^{-1}\right)^{*}\left(f\left(x,y,\dot{x},\dot{y}\right)dx+g\left(x,y,\dot{x},\dot{y}\right)dy\right)\\
 & =-\intop_{0}^{\dot{\bar{x}}}B_{\bar{x}\bar{x}}\left(\bar{x},\bar{y},\bar{\nu},\dot{\bar{y}}\right)d\bar{\nu}\cdot d\bar{x}-\intop_{0}^{\dot{\bar{y}}}B_{\bar{x}\bar{y}}\left(\bar{x},\bar{y},0,\bar{\mu}\right)d\bar{\mu}\cdot d\bar{x}\\
 & -\intop_{0}^{\dot{\bar{x}}}B_{\bar{x}\bar{y}}\left(\bar{x},\bar{y},\bar{\nu},\dot{\bar{y}}\right)d\bar{\nu}\cdot d\bar{x}-\intop_{0}^{\dot{\bar{y}}}B_{\bar{y}\bar{y}}\left(\bar{x},\bar{y},0,\bar{\mu}\right)d\bar{\mu}\cdot d\bar{x}\\
 & =f\left(\bar{x},\bar{y},\dot{\bar{x}},\dot{\bar{y}}\right)dx+g\left(\bar{x},\bar{y},\dot{\bar{x}},\dot{\bar{y}}\right)dy,
\end{align*}
as required.
\end{proof}
\begin{rem}
Theorems \ref{thm:K} and \ref{thm:KappaGlobal} show, according to
our expectation from the cohomology results of the global variational
theory by Takens \cite{Takens} and others, that the global exactness
of Lepage equivalent $\alpha_{\varepsilon}$ on $J^{2}\left(\mathbb{R}\times M\right)$
reduces to global exactness of a~$2$-form \emph{defined on} $M$.
\end{rem}

\begin{cor}
\label{cor:Simple}If the $2$-form $\omega$ \eqref{eq:omega} vanishes,
i.e. if the coefficients of $\omega$ satisfy
\begin{equation}
\left(\frac{\partial A_{x}}{\partial\dot{y}}-\frac{\partial A_{y}}{\partial\dot{x}}\right)_{\left(x,y,0,0\right)}=0\label{eq:Simple}
\end{equation}
in every chart, then source form $\varepsilon$ is globally variational
and it admits a~Lagrangian on $\mathbb{R}\times T^{1}M$, namely
\begin{equation}
\lambda=h\left(\mu_{0}+\kappa\right),\label{eq:SimpleLagrangian}
\end{equation}
where $\mu_{0}$ and $\kappa$ are given by \eqref{eq:Eta0} and \eqref{eq:kappa},
respectively.
\end{cor}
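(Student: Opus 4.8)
The plan is to solve the global exactness equation $\alpha_\varepsilon = d\mu$ on $\mathbb{R}\times T^{1}M$, for then the horizontal component $h\mu$ is a global Lagrangian and, by the discussion following \eqref{eq:AlfaExactness}, its Euler--Lagrange form is $\varepsilon$. The point of the corollary is that hypothesis \eqref{eq:Simple} is exactly the condition under which all the pieces prepared in Lemmas \ref{lem:AlfaClosed}, \ref{lem:Eta0} and Theorems \ref{thm:K}, \ref{thm:KappaGlobal} assemble into such a $\mu$; thus I expect the proof to be a short synthesis rather than a new computation.

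First I would write the canonical decomposition $\alpha_\varepsilon = \alpha_0 \wedge dt + \alpha'$ of Lemma \ref{lem:AlfaClosed}, in which both summands are closed and globally defined on $T^{1}M$. The summand containing $dt$ is disposed of unconditionally: Lemma \ref{lem:Eta0} furnishes a globally defined $\mu_0$ with $\alpha_0 \wedge dt = d\mu_0$. For the remaining summand I would invoke Theorem \ref{thm:K}, which gives $\alpha' - \omega = d\kappa$ in every fibered chart. Under hypothesis \eqref{eq:Simple} the $2$-form $\omega$ of \eqref{eq:omega} vanishes identically, so this identity collapses to $\alpha' = d\kappa$, and Theorem \ref{thm:KappaGlobal} guarantees that $\kappa$ is a global $1$-form on $T^{1}M$. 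Consequently $\mu_0 + \kappa$ is globally defined on $\mathbb{R}\times T^{1}M$ and
\[
\alpha_\varepsilon = d\mu_0 + d\kappa = d\left(\mu_0 + \kappa\right),
\]
so that $\mu = \mu_0 + \kappa$ solves the exactness equation \eqref{eq:AlfaExactness}.

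It then remains to pass from $\mu$ to the Lagrangian. Applying $p_1$ to the previous display and using $\varepsilon = p_1 \alpha_\varepsilon$ from Theorem \ref{thm:LocalVariationality}, (f), gives $\varepsilon = p_1 d\mu$; locally $\mu$ agrees with the Cartan form $\Theta_\lambda$ up to $df$, whence the horizontal component $\lambda = h(\mu_0 + \kappa)$ differs from a first-order Lagrangian by a total derivative and has $\varepsilon$ as its Euler--Lagrange form, establishing \eqref{eq:SimpleLagrangian}. I do not anticipate a genuine obstacle in this corollary: the substantive work was already carried out in Theorem \ref{thm:K} (the homotopy identity) and Theorem \ref{thm:KappaGlobal} (globality of $\kappa$). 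The hypothesis $\omega = 0$ is precisely what removes the single remaining difficulty, namely the global exactness on $M$ of the $2$-form $\omega$, which in the general case is governed by the cohomological condition $H^{2}_{\mathrm{dR}}M = 0$ treated in Section 4; the only delicate point is confirming that $h(\mu_0 + \kappa)$ recovers $\varepsilon$ itself under the Euler--Lagrange mapping, and this is ensured by the local representation $\mu = \Theta_\lambda + df$.
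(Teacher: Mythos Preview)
Your proposal is correct and follows essentially the same route as the paper, which simply records the corollary as an immediate consequence of Theorem~\ref{thm:K} and Lemma~\ref{lem:Eta0}. Your write-up is in fact more explicit, since you also invoke Theorem~\ref{thm:KappaGlobal} to secure the globality of $\kappa$ and spell out the passage from $\mu=\mu_0+\kappa$ to the Lagrangian $h\mu$ via the discussion after \eqref{eq:AlfaExactness}.
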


\begin{proof}
This is an immediate consequence of Theorem \ref{thm:K} and Lemma
\ref{lem:Eta0}.
\end{proof}

\section{A global construction on 2-manifolds: Top-cohomology\label{sec:Top}}

Recall now two theorems, characterizing the \emph{top} \emph{de Rham
cohomology} groups of connected smooth manifolds, that is $H_{\mathrm{dR}}^{m}M$,
where $\dim M=m$. Our main reference is Lee \cite{Lee}.
\begin{thm}[Orientable, top-cohomology]
\label{thm:TopOrient} Let $M$ be a connected orientable smooth
$m$-manifold.

\emph{(a)} If $M$ is compact, then $H_{\mathrm{dR}}^{m}M$ is one-dimensional
and is spanned by the cohomology class of any smooth orientation form.

\emph{(b)} If $M$ is noncompact, then $H_{\mathrm{dR}}^{m}M=0$.
\end{thm}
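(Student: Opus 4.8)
The plan is to reduce both statements to properties of integration $\int_{M}$ acting on compactly supported top-degree forms; by Stokes' theorem this map annihilates exact forms and so factors through cohomology. The conceptual heart of the argument is the single assertion that, for \emph{any} connected oriented $m$-manifold $M$, integration induces an isomorphism $H_{c}^{m}M\cong\mathbb{R}$ of the top compactly supported de Rham cohomology with $\mathbb{R}$. Granting this, part (a) is immediate: when $M$ is compact one has $\Omega_{c}^{m}M=\Omega^{m}M$, whence $H_{\mathrm{dR}}^{m}M=H_{c}^{m}M\cong\mathbb{R}$; and since any smooth orientation form $\sigma$ satisfies $\int_{M}\sigma>0$, its class is a nonzero vector in a one-dimensional space and therefore a generator.

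First I would establish the local model on $\mathbb{R}^{m}$: the top-degree \emph{compactly supported} Poincar\'e lemma, stating that $\tau\in\Omega_{c}^{m}\mathbb{R}^{m}$ equals $d\eta$ for some $\eta\in\Omega_{c}^{m-1}\mathbb{R}^{m}$ if and only if $\int_{\mathbb{R}^{m}}\tau=0$. This is a routine induction on $m$, the case $m=1$ amounting to the fact that a compactly supported function with vanishing integral is the derivative of its (again compactly supported) primitive. Surjectivity of $\int_{M}$ onto $\mathbb{R}$ is trivial, being witnessed by any bump $m$-form supported in a single chart.

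The main obstacle, and the step I expect to absorb most of the effort, is \emph{global injectivity}: propagating the local result across $M$ using connectedness. Given $\tau\in\Omega_{c}^{m}M$ with $\int_{M}\tau=0$, I would cover its compact support by finitely many charts diffeomorphic to $\mathbb{R}^{m}$ and, via a subordinate partition of unity, write $\tau=\sum_{i}\tau_{i}$ with each $\tau_{i}$ supported in one chart. Fixing a reference bump form $\beta$ with $\int_{M}\beta=1$, I claim each $\tau_{i}$ is cohomologous to $c_{i}\beta$, where $c_{i}=\int_{M}\tau_{i}$: one transports integral mass along a \emph{chain of overlapping charts} joining the support of $\tau_{i}$ to that of $\beta$ (such a chain exists precisely because $M$ is connected), correcting at each overlap by an exact compactly supported form supplied by the local lemma. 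Summing, $\tau$ is cohomologous to $\left(\sum_{i}c_{i}\right)\beta=\left(\int_{M}\tau\right)\beta=0$, so $\tau$ is exact, establishing $H_{c}^{m}M\cong\mathbb{R}$.

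It remains to treat the noncompact case (b), where ordinary and compactly supported cohomology diverge. The cleanest route is Poincar\'e duality, $H_{\mathrm{dR}}^{m}M\cong\left(H_{c}^{0}M\right)^{*}$, together with the observation that a compactly supported locally constant function on a connected \emph{noncompact} manifold must vanish; hence $H_{c}^{0}M=0$ and $H_{\mathrm{dR}}^{m}M=0$. Should one wish to avoid duality, the same conclusion follows directly from the propagation mechanism above, now exploiting a feature peculiar to noncompactness: since $M$ is connected and noncompact one can arrange a locally finite chain of charts escaping every compact set, and the integral mass of a closed top form may be pushed step by step \emph{off to infinity} rather than being trapped by the global invariant $\int_{M}$. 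Controlling the convergence of the resulting telescoping sum of exact corrections, by means of local finiteness, is the technical point that in the noncompact setting replaces the role played by $\int_{M}\tau$ in the compact case.
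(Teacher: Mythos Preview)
Your proposal is correct, but it is worth noting that the paper does not prove this theorem at all: it is quoted from Lee \cite{Lee} as background. What the paper \emph{does} do is reproduce, in the paragraphs following the statement, Lee's constructive argument for part (b) in the $m=2$ case, because the entire purpose of Section~\ref{sec:Top} is to build an explicit primitive $\eta$ for a given closed $2$-form $\omega$ on a noncompact surface. That construction is precisely your second route for (b): a locally finite cover $\{V_j\}$ with the escape-to-infinity property (Lemma~\ref{lem:Covering}), bump forms $\theta_j$ carrying unit integral in the overlaps, and an inductive telescoping $d\eta_j=\omega_j+\sum_{i:K(i)=j}c_i\theta_i-c_j\theta_j$ that pushes the integral mass out along the chain.

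The main difference in emphasis is that your ``cleanest route'' for (b), Poincar\'e duality $H_{\mathrm{dR}}^{m}M\cong(H_c^0 M)^*$, is non-constructive. It proves the cohomology group vanishes but does not hand you a specific $\eta$ with $d\eta=\omega$, which is exactly what the paper needs downstream in Theorem~\ref{thm:Top-Var} to write the global Lagrangian $\lambda=h(\mu_0+\kappa+\eta)$. So while both arguments establish the statement, only the direct propagation mechanism is usable for the paper's application; the duality proof buys elegance at the cost of the explicit output.
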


\begin{thm}[Nonorientable, top-cohomology]
\label{thm:TopNonOrient} Let M be a connected nonorientable smooth
n-manifold. Then $H_{\mathrm{dR}}^{m}M=0$.
\end{thm}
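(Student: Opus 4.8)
The plan is to reduce the nonorientable case to the orientable top-cohomology Theorem \ref{thm:TopOrient} by passing to the orientation double cover. Since $M$ is connected and nonorientable, its orientation cover $\pi\colon\widetilde{M}\to M$ (see Lee \cite{Lee}) is a connected orientable smooth $m$-manifold carrying a smooth deck involution $\sigma\colon\widetilde{M}\to\widetilde{M}$ with $\pi\circ\sigma=\pi$, and $\sigma$ is orientation-reversing; connectedness of $\widetilde{M}$ is exactly what nonorientability of $M$ guarantees. Any smooth $m$-form $\omega$ on $M$ is automatically closed, being of top degree, so it suffices to show that $\omega$ is exact.

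First I would show that the pullback $\pi^{*}\omega$ is exact on $\widetilde{M}$. If $\widetilde{M}$ is noncompact, which happens precisely when $M$ is noncompact, then $H_{\mathrm{dR}}^{m}\widetilde{M}=0$ by Theorem \ref{thm:TopOrient}\,(b) and exactness is immediate. If $\widetilde{M}$ is compact, Theorem \ref{thm:TopOrient}\,(a) identifies $H_{\mathrm{dR}}^{m}\widetilde{M}$ with $\mathbb{R}$ via integration against a fixed orientation, so $\pi^{*}\omega$ is exact if and only if $\int_{\widetilde{M}}\pi^{*}\omega=0$. Here the orientation-reversing involution does the work: from $\pi\circ\sigma=\pi$ we get $\sigma^{*}\pi^{*}\omega=\pi^{*}\omega$, while $\int_{\widetilde{M}}\sigma^{*}\beta=-\int_{\widetilde{M}}\beta$ for every top-form $\beta$ because $\sigma$ reverses orientation. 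Hence $\int_{\widetilde{M}}\pi^{*}\omega=-\int_{\widetilde{M}}\pi^{*}\omega$, forcing the integral to vanish and $\pi^{*}\omega$ to be exact.

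Having written $\pi^{*}\omega=d\beta$ on $\widetilde{M}$, I would descend the primitive to $M$ by averaging over the deck group. Put $\widetilde{\eta}=\tfrac{1}{2}\bigl(\beta+\sigma^{*}\beta\bigr)$; then $\widetilde{\eta}$ is $\sigma$-invariant and, since $\sigma^{*}d\beta=\sigma^{*}\pi^{*}\omega=\pi^{*}\omega$, it satisfies $d\widetilde{\eta}=\pi^{*}\omega$. Because $\pi$ is the quotient of $\widetilde{M}$ by the deck group $\{\mathrm{id},\sigma\}$, the pullback $\pi^{*}$ is an isomorphism of $\Omega^{m-1}M$ onto the $\sigma$-invariant $(m-1)$-forms on $\widetilde{M}$; thus there is a unique $(m-1)$-form $\eta$ on $M$ with $\pi^{*}\eta=\widetilde{\eta}$. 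Applying $\pi^{*}$ to $d\eta-\omega$ gives $\pi^{*}(d\eta-\omega)=d\widetilde{\eta}-\pi^{*}\omega=0$, and since $\pi$ is a local diffeomorphism, $\pi^{*}$ is injective, whence $d\eta=\omega$. Therefore every top-degree form on $M$ is exact and $H_{\mathrm{dR}}^{m}M=0$.

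The only genuinely nontrivial inputs are the existence of the orientation double cover with its orientation-reversing deck involution and the sign rule for integration under orientation-reversing diffeomorphisms; the descent rests on the standard fact that deck-invariant forms on a covering are precisely the pullbacks from the base, together with injectivity of $\pi^{*}$ for a local diffeomorphism. I expect the compact case, namely pinning down $\int_{\widetilde{M}}\pi^{*}\omega=0$ through the involution, to be the conceptual heart of the argument, while the noncompact case is a direct appeal to Theorem \ref{thm:TopOrient}\,(b).
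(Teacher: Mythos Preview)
Your argument via the orientation double cover is correct and is essentially the standard proof; note, however, that the paper does not supply its own proof of this theorem but merely recalls it from Lee \cite{Lee}, whose argument is the one you have reproduced. Since the paper treats the result as a cited fact, there is no in-paper proof to compare against beyond observing that your route coincides with Lee's.
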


\begin{rem}
In other words, Theorems \ref{thm:TopOrient} and \ref{thm:TopNonOrient}
characterize manifolds with trivial de Rham top-cohomology groups
as follows: \emph{If a~connected smooth $m$-manifold $M$ obeys
$H_{\mathrm{dR}}^{m}M=0$, then $M$ is either nonorientable, or orientable
and noncompact.} Moreover, the proof of Theorem \ref{thm:TopOrient},
(b), and also of Theorem \ref{thm:TopNonOrient}, is \emph{constructive.}
\end{rem}

\begin{rem}
\label{rem:Lee}In general, if $M$ is an $m$-dimensional smooth
manifold and $\rho$ is a~\emph{closed} differential $k$-form on
$M$, $k\leq m$, then the equation $\rho=d\mu$ need \emph{not} have
a~(global) solution $\mu$ on $X$. Indeed, it is the $k$-th de
Rham cohomology group $H_{\mathrm{dR}}^{k}M=\mathrm{Ker}\,d_{k}/\mathrm{Im}\,d_{k-1}$
which decides about \emph{solvability} of the exactness equation $\rho=d\mu$.
Clearly, if $H_{\mathrm{dR}}^{k}M$ is \emph{trivial}, then $\rho=d\mu$
has always a solution $\mu$ on $X$. Nevertheless, in this case ($H_{\mathrm{dR}}^{k}M=0$)
there is \emph{no general constructive} procedure of finding a solution
$\mu$ for a given closed $k$-form $\rho$, where $k<m$; if $k=m$,
to find a solution one can apply the \emph{top-cohomology} theorems.
\end{rem}

Our problem of solving the exactness equation $\alpha_{\varepsilon}=d\mu$
\eqref{eq:AlfaExactness} \emph{globally} concerns a~given $2$-form
$\alpha_{\varepsilon}$ on $\mathbb{R}\times T^{1}M$. However, Theorem
\ref{thm:K} and Lemma \ref{lem:Eta0} reduce this problem to a~$2$-form,
which is defined on $M$. Indeed, applying formulas \eqref{eq:Eta0}
and \eqref{eq:Keq} we obtain 
\[
\alpha_{\varepsilon}=\alpha_{0}+\alpha'=\omega+d\left(\mu_{0}+\kappa\right),
\]
where $\mu_{0}$ \eqref{eq:Eta0} and $\kappa$ \eqref{eq:kappa}
are $1$-forms on $\mathbb{R}\times T^{1}M$, and $\omega$ \eqref{eq:omega}
is a~$2$-form on $M$. 

Following Lee \cite{Lee}, for orientable noncompact manifolds we
briefly describe construction of a solution of the exactness equation
for $\omega$,
\begin{equation}
\omega=d\eta.\label{eq:OmegaExactness}
\end{equation}
To this purpose, recall the Poincar\'e lemma for \emph{compactly
supported} forms.
\begin{lem}
\label{lem:Poincare}Let $\rho$ be a~compactly supported closed
$k$-form on $\mathbb{R}^{m}$, where $1\leq k\leq n$. If $k=m$,
suppose in addition that $\int_{\mathbb{R}^{m}}\rho=0$. Then there
exists a~compactly supported $(k-1)$-form $\vartheta$ on $\mathbb{R}^{m}$
such that $d\vartheta=\rho$.
\end{lem}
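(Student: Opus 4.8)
The plan is to prove the statement by induction on the dimension $m$, realizing $\mathbb{R}^{m}$ as the product $\mathbb{R}^{m-1}\times\mathbb{R}$ and pushing the problem down one dimension by integration along the last coordinate fiber, in the spirit of the constructive top-cohomology arguments of Lee \cite{Lee}. The base case $m=1$ is immediate: any compactly supported closed $1$-form is $\rho=f(t)\,dt$ with $f$ of compact support, and the function $\vartheta(t)=\int_{-\infty}^{t}f(s)\,ds$ satisfies $d\vartheta=\rho$. Here the extra hypothesis $\int_{\mathbb{R}}\rho=\int_{\mathbb{R}}f\,dt=0$ is precisely what forces $\vartheta(t)\to 0$ as $t\to+\infty$, hence makes $\vartheta$ compactly supported; there is no case $k<m$ when $m=1$.

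For the inductive step I would take coordinates $(x,t)$ with $x\in\mathbb{R}^{m-1}$, $t\in\mathbb{R}$, fix a bump $1$-form $e=e(t)\,dt$ with compact support and $\int_{\mathbb{R}}e=1$, and set $A(t)=\int_{-\infty}^{t}e(s)\,ds$. I then introduce three operators on compactly supported forms: the fiber integration $\pi_{*}$, vanishing on forms with no $dt$ and sending $(\pi^{*}\phi)\,f\,dt$ to $\phi\int_{-\infty}^{\infty}f\,dt$; the insertion $e_{*}\phi=(\pi^{*}\phi)\wedge e$; and the homotopy operator $K$, which is zero on forms with no $dt$ and is given on the remaining generators by $K\bigl((\pi^{*}\phi)\,f\,dt\bigr)=(\pi^{*}\phi)\bigl(\int_{-\infty}^{t}f\,ds-A(t)\int_{-\infty}^{\infty}f\,ds\bigr)$. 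A direct computation yields the homotopy identity $\mathrm{id}-e_{*}\pi_{*}=\pm(dK-Kd)$ on compactly supported forms, where the sign depends only on the degree and is immaterial to the conclusion. The decisive feature of the correction term $A(t)\int f\,ds$ is that the bracket vanishes as $t\to\pm\infty$, so that $K$ \emph{preserves compact support}; this is exactly the point at which the compactly supported Poincar\'e lemma departs from the ordinary one, whose radial homotopy destroys compact support, and I expect this to be the main obstacle.

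Applying the homotopy identity to the given closed form $\rho$ (so $d\rho=0$) gives $\rho=e_{*}\pi_{*}\rho\pm dK\rho$ with $K\rho$ compactly supported, so it suffices to realize $e_{*}\pi_{*}\rho$ as $d$ of a compactly supported form. Now $\pi_{*}\rho$ is a compactly supported $(k-1)$-form on $\mathbb{R}^{m-1}$, and it is closed because $\pi_{*}$ commutes with $d$ up to a degree sign. When $2\leq k<m$ the inductive hypothesis makes $\pi_{*}\rho$ exact, $\pi_{*}\rho=d\phi$; when $k=1$ the form $\pi_{*}\rho$ is a compactly supported closed $0$-form on the connected noncompact manifold $\mathbb{R}^{m-1}$, hence identically zero, and I take $\phi=0$; and when $k=m$ the form $\pi_{*}\rho$ has top degree $m-1$ and, by Fubini, integral equal to $\pm\int_{\mathbb{R}^{m}}\rho=0$, so the top-degree case of the inductive hypothesis again gives $\pi_{*}\rho=d\phi$ with $\phi$ compactly supported. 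Since $de=0$, one has $e_{*}\pi_{*}\rho=e_{*}d\phi=d(e_{*}\phi)$, whence $\rho=d\bigl(e_{*}\phi\pm K\rho\bigr)$ is exact with compactly supported primitive, closing the induction.

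The conceptual content is carried entirely by this single fiber reduction together with the transfer of the integral hypothesis: the compact-support normalization built into $K$ keeps all constructions inside $\Omega^{\bullet}_{c}$, and Fubini converts $\int_{\mathbb{R}^{m}}\rho=0$ into exactly the integral condition that the inductive hypothesis demands of $\pi_{*}\rho$ in the top-degree case. Beyond verifying that $K$ respects compact support, the only bookkeeping is the tracking of the degree-dependent signs in $\pi_{*}d=\pm d\pi_{*}$ and in $\mathrm{id}-e_{*}\pi_{*}=\pm(dK-Kd)$, and since only their parity enters, this does not affect the argument.
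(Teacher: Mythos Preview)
Your argument is correct. Note, however, that the paper does not give its own proof of this lemma: it is stated as a recall of the compactly supported Poincar\'e lemma, with attribution to Lee \cite{Lee}, and is used as a black box in the subsequent construction of $\eta$. So there is no in-paper proof to compare against.

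That said, your proof is precisely the standard one from the cited reference (and from Bott--Tu): induction on $m$ via fiber integration $\pi_{*}$ along the last coordinate, the section $e_{*}$ given by wedging with a normalized bump form $e$, and the chain homotopy $K$ whose $A(t)\int f$ correction is exactly what makes $K$ preserve compact supports. Your handling of the three cases for $\pi_{*}\rho$ is right: for $k=1$ the pushed-down form is a compactly supported locally constant function on the connected noncompact $\mathbb{R}^{m-1}$, hence zero; for $1<k<m$ the inductive hypothesis applies directly; and for $k=m$ Fubini transports the hypothesis $\int_{\mathbb{R}^{m}}\rho=0$ to $\int_{\mathbb{R}^{m-1}}\pi_{*}\rho=0$, which is what the top-degree inductive step needs. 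The degree-dependent signs in $\pi_{*}d=\pm d\pi_{*}$ and in the homotopy identity are indeed immaterial to exactness, as you note.
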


Let us consider the case when $M$ is a \emph{connected orientable
noncompact} smooth $2$-manifold (Theorem \eqref{thm:TopOrient},
(b)). The solution $\eta$ of \eqref{eq:OmegaExactness} is determined
by means of an appropriate covering of $M$.
\begin{lem}
\label{lem:Covering}If $M$ is a noncompact connected manifold, then
there exists countable, locally finite open cover $\{V_{j}\}$ of
$M$ such that each $V_{j}$ is connected and precompact, and for
each $j$, there exists an index $k>j$ such that $V_{j}\cap V_{k}\neq\emptyset$.
\end{lem}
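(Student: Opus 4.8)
The plan is to first produce a cover having all the stated properties except the chain condition ``for each $j$ there is $k>j$ with $V_{j}\cap V_{k}\neq\emptyset$'', and then to secure the chain condition by a suitable reindexing. For the first step I would invoke only standard point-set topology of manifolds: $M$ is second countable, locally compact and Hausdorff, hence admits an exhaustion by compact sets $K_{1}\subset\operatorname{int}K_{2}\subset K_{2}\subset\cdots$ with $\bigcup_{i}K_{i}=M$, and it is paracompact. Covering each compact collar $K_{i+1}\setminus\operatorname{int}K_{i}$ by finitely many connected, precompact coordinate balls contained in $\operatorname{int}K_{i+2}\setminus K_{i-1}$, and collecting these over all $i$, yields a countable, locally finite open cover $\{V_{j}\}$ by connected precompact sets (cf. Lee \cite{Lee}). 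This is the routine part.

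The substance is the chain condition, which I would phrase combinatorially. Form the overlap graph $G$ whose vertices are the indices $j$, with an edge between $i$ and $j$ exactly when $V_{i}\cap V_{j}\neq\emptyset$. Then the chain condition is equivalent to reindexing the vertices of $G$ (i.e. choosing a bijection of the index set with $\mathbb{N}$) so that each vertex has a neighbour of strictly larger index; in other words, so that the enumeration has no ``local maximum''. I would record three properties of $G$: it is \emph{connected}, because $M=\bigcup_{j}V_{j}$ is connected and the $V_{j}$ are open and connected; it is \emph{locally finite}, because each $\overline{V_{j}}$ is compact and a locally finite family has only finitely many members meeting a compact set; and it is \emph{infinite}, because a noncompact space cannot be covered by finitely many precompact sets. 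Thus everything reduces to a purely graph-theoretic claim about connected, locally finite, infinite graphs.

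To prove that claim I would pass to a spanning tree $T$ of $G$, rooted at any vertex: a later neighbour in $T$ is a later neighbour in $G$, so it suffices to produce the desired enumeration for the tree, which is again connected, locally finite and infinite. Here the naive orderings fail -- listing vertices by graph distance from a base point leaves ``inward-pointing'' pendant branches as local maxima -- and overcoming this is the main obstacle. The device I would use is to isolate the infinite part of $T$: by K\"{o}nig's infinity lemma $T$ contains a ray, and more precisely the set $T_{\infty}$ of vertices having infinitely many descendants is a subtree containing the root, in which every vertex has a child again lying in $T_{\infty}$ (one of its finitely many children must inherit infinitely many descendants); hence $T_{\infty}$ has no leaves. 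I would enumerate $T_{\infty}$ in breadth-first order, so that every vertex of $T_{\infty}$ precedes one of its children; since $T_{\infty}$ has no leaves and has finite, nonempty levels, this is an enumeration of order type $\omega$ in which no vertex of $T_{\infty}$ is a local maximum.

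Finally I would splice in the remaining vertices. Each vertex outside $T_{\infty}$ lies in a \emph{finite} subtree $F$ attached to $T_{\infty}$ at a single vertex $a$; ordering $F$ with descendants before ancestors and inserting the whole block $F$ immediately before $a$ gives every vertex of $F$ a later neighbour (its parent inside $F$, while the top of $F$ acquires $a$). Because each such $F$ is finite and is inserted below a single vertex of the order-type-$\omega$ enumeration of $T_{\infty}$, the relative order of the $T_{\infty}$-vertices is unchanged and the combined order is still of type $\omega$ with no local maximum. Reading off the cover in this order yields the required chain condition, the escape to infinity being carried by the leafless spine $T_{\infty}$ -- which exists precisely because $M$ is noncompact.
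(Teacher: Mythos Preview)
Your argument is correct. The reduction to the overlap graph is sound: connectedness of $M$ and openness/connectedness of the $V_j$ give a connected graph, local finiteness of the cover together with precompactness of each $V_j$ gives local finiteness of the graph, and noncompactness of $M$ forces the graph to be infinite. The spanning-tree construction with the leafless spine $T_\infty$ (guaranteed by local finiteness and K\"onig's lemma) and the insertion of the finite pendant subtrees in reverse depth order is a clean way to manufacture an enumeration with no local maximum; the key points---that $T_\infty$ is upward closed (hence a subtree containing the root), that each of its levels is finite and nonempty, and that only finitely many vertices get inserted before any given $a\in T_\infty$---are all easily checked from local finiteness.

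By contrast, the paper does not prove this lemma at all: its entire proof is the sentence ``See Lee \cite{Lee}, Errata 2018.'' So there is nothing to compare at the level of mathematical strategy; you have supplied a self-contained argument where the paper defers to an external reference. Your write-up is more detailed than necessary for a lemma the authors clearly regard as background, but as an independent proof it is complete and would serve well if one did not want to rely on the errata citation.
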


\begin{proof}
See Lee \cite{Lee}, Errata 2018.
\end{proof}
Consider $\{V_{j}\}$ satisfying Lemma \ref{lem:Covering}. For each
$j$, denote $K(j)$ the least index $k>j$ such that $V_{j}\cap V_{k}\neq\emptyset$.
Let $\theta_{j}$ be a~$2$-form, compactly supported in $V_{j}\cap V_{K(j)}$
such that $\int_{M}\theta_{j}=1$. Let $\{\psi_{j}\}$ be a~smooth
partition of unity subordinate to $\{V_{j}\}$. For each $j$, put
$\omega_{j}=\psi_{j}\omega$. Put $c_{1}=\int_{V_{1}}\omega_{1}$.
Then the $2$-form $\omega_{1}-c_{1}\theta_{1}$ is compactly supported
in $V_{1}$, and $\int_{M}\left(\omega_{1}-c_{1}\theta_{1}\right)=0$.
Withous loss of generality, we may assume that $V_{j}$ is star-shaped
for every $j$. By Lemma \ref{lem:Poincare} there exists a compactly
supported~$1$-form $\eta_{1}$ on $V_{1}$ such that $d\eta_{1}=\omega_{1}-c_{1}\theta_{1}$.
Further, we proceed by induction as follows. Suppose we have compactly
supported $1$-forms $\eta_{j}$ on $V_{j}$ and constants $c_{j}$,
where $1\leq j\leq m$, such that
\begin{equation}
d\eta_{j}=\left(\omega_{j}+\sum_{i:K(i)=j}c_{i}\theta_{i}\right)-c_{j}\theta_{j}.\label{eq:Lee}
\end{equation}
Let
\[
c_{j+1}=\int_{V_{j+1}}\left(\omega_{j+1}+\sum_{i:K(i)=j+1}c_{i}\theta_{i}\right).
\]
Then
\[
\int_{V_{j+1}}\left(\left(\omega_{j+1}+\sum_{i:K(i)=j+1}c_{i}\theta_{i}\right)-c_{j+1}\theta_{j+1}\right)=0,
\]
and Lemma \ref{lem:Poincare} assures existence of a~compactly supported
$1$-form $\eta_{j+1}$ on $V_{j+1}$ which satisfies formula \eqref{eq:Lee}
with $j$ replaced by $j+1$.

Now, extending each $\eta_{j}$ on $M\backslash V_{j}$ to be zero,
we set
\begin{equation}
\eta=\sum_{j}\eta_{j}.\label{eq:Eta-Reseni}
\end{equation}
Since the open covering $\{V_{j}\}$ is locally finite, formula \eqref{eq:Eta-Reseni}
is correct and defines a~differential $1$-form on $M$ such that
$d\eta=\omega$, as required.

The preceding procedure of solving the exactness equation \eqref{eq:OmegaExactness},
together with Lemma \ref{lem:AlfaClosed} and Theorems \ref{thm:K},
\ref{thm:KappaGlobal}, imply our main result, the following application of the de
Rham top-cohomology in the theory of variational equations.
\begin{thm}
\label{thm:Top-Var}Let $M$ be a connected smooth $2$-manifold,
and let $\varepsilon$ be a locally variational source form defined
on $\mathbb{R}\times T^{2}M$. If $H_{\mathrm{dR}}^{2}M=0$, then
$\varepsilon$ is globally variational and admits a global Lagragian
\[
\lambda=h\left(\mu_{0}+\kappa+\eta\right),
\]
where~$1$-form $\eta$ is a~solution of equation \eqref{eq:OmegaExactness}
on $M$, $\mu_{0}$ and $\kappa$ are given by formulas \eqref{eq:Eta0}
and \eqref{eq:kappa}, respectively.
\end{thm}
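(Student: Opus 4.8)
The plan is to produce, under the hypothesis $H_{\mathrm{dR}}^{2}M=0$, a single global primitive $\mu$ of $\alpha_{\varepsilon}$ on $\mathbb{R}\times T^{1}M$ and then take its horizontal component. First I would invoke Lemma \ref{lem:AlfaClosed} to decompose the Lepage equivalent as $\alpha_{\varepsilon}=\alpha_{0}\wedge dt+\alpha'$, where both summands are closed and $\alpha'$ contains no $dt$. Lemma \ref{lem:Eta0} already furnishes a global $\mu_{0}$ on $\mathbb{R}\times T^{1}M$ with $\alpha_{0}\wedge dt=d\mu_{0}$, while Theorem \ref{thm:K} rewrites $\alpha'=\omega+d\kappa$ with $\kappa$ given by \eqref{eq:kappa}; by Theorem \ref{thm:KappaGlobal} both $\kappa$ and $\omega$ are globally defined, the form $\omega$ \eqref{eq:omega} being (the pullback of) a $2$-form on the base $M$ since its coefficient is evaluated at $\dot{x}=\dot{y}=0$. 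Assembling these yields
\[
\alpha_{\varepsilon}=d\mu_{0}+\omega+d\kappa=\omega+d\left(\mu_{0}+\kappa\right)
\]
on $\mathbb{R}\times T^{1}M$.

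The decisive step is to make $\omega$ globally exact. As a $2$-form on the $2$-manifold $M$ it is automatically closed ($d\omega$ being a $3$-form on a $2$-manifold), so the assumption $H_{\mathrm{dR}}^{2}M=0$ guarantees $\omega=d\eta$ for some global $1$-form $\eta$ on $M$. Here I would split according to the two possibilities compatible with the vanishing of the top cohomology: if $M$ is orientable and noncompact, $\eta$ is built explicitly by the covering construction preceding the statement (Lemmas \ref{lem:Poincare} and \ref{lem:Covering} together with formula \eqref{eq:Eta-Reseni}); if $M$ is nonorientable, one invokes the constructive proof of Theorem \ref{thm:TopNonOrient}. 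Pulling $\eta$ back to $\mathbb{R}\times T^{1}M$ and substituting gives
\[
\alpha_{\varepsilon}=d\eta+d\left(\mu_{0}+\kappa\right)=d\mu,\qquad\mu:=\mu_{0}+\kappa+\eta,
\]
a global solution of the exactness equation \eqref{eq:AlfaExactness}.

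It remains to pass from the primitive $\mu$ to a Lagrangian. In any fibered chart the local Vainberg--Tonti Lagrangian $\lambda_{\mathrm{VT}}$ of Theorem \ref{thm:LocalVariationality}, (d), has Cartan form $\Theta_{\lambda_{\mathrm{VT}}}$ satisfying $d\Theta_{\lambda_{\mathrm{VT}}}=\alpha_{\varepsilon}$, by uniqueness of the closed Lepage equivalent in Theorem \ref{thm:LocalVariationality}, (f), together with $p_{1}d\Theta_{\lambda_{\mathrm{VT}}}=E_{\lambda_{\mathrm{VT}}}=\varepsilon$ from \eqref{eq:p1dTheta}. Comparing with $\alpha_{\varepsilon}=d\mu$ shows that $\mu-\Theta_{\lambda_{\mathrm{VT}}}$ is closed, so locally $\mu=\Theta_{\lambda_{\mathrm{VT}}}+df$ and $h\mu=\lambda_{\mathrm{VT}}+h(df)$ differs from $\lambda_{\mathrm{VT}}$ only by the total derivative $h(df)$. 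Being a null Lagrangian, $h(df)$ has vanishing Euler--Lagrange form, whence $E_{h\mu}=E_{\lambda_{\mathrm{VT}}}=\varepsilon$ in every chart; since $\mu$ is global, $\lambda=h\mu=h\left(\mu_{0}+\kappa+\eta\right)$ is a globally defined Lagrangian with Euler--Lagrange form $\varepsilon$, establishing global variationality.

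I expect the only genuinely nontrivial step to be the global exactness of $\omega$ and the concrete determination of $\eta$: every ingredient upstream is already packaged in the cited lemmas, whereas constructing an explicit global $\eta$ is exactly where the top-cohomology theorems and the restriction to $\dim M=2$ become indispensable, as stressed in Remark \ref{rem:Lee}.
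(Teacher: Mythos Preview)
Your proposal is correct and follows essentially the same route as the paper: the paper's argument is precisely to assemble $\alpha_{\varepsilon}=\omega+d(\mu_{0}+\kappa)$ from Lemma~\ref{lem:AlfaClosed}, Lemma~\ref{lem:Eta0}, and Theorems~\ref{thm:K}--\ref{thm:KappaGlobal}, and then to invoke the top-cohomology construction of Section~\ref{sec:Top} to solve $\omega=d\eta$. Your final paragraph, justifying that $h\mu$ really is a Lagrangian for $\varepsilon$ by comparing $\mu$ locally with $\Theta_{\lambda_{\mathrm{VT}}}$, simply makes explicit what the paper records in the discussion following~\eqref{eq:AlfaExactness}.
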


\section{Examples}

We analyze two examples of globally variational source forms on $\mathbb{R}\times T^{2}M$
over $\mathbb{R}$ (one-dimensional basis), where $M$ is \emph{two}-dimensional
smooth manifold, the open M\"obius strip $M_{r,a}$ and the punctured
torus $T_{P}$, both with \emph{trivial} the second de Rham cohomology
group, $H_{\mathrm{dR}}^{2}M=0$. Another classical examples of smooth
manifolds with this property include namely the punctured plane $\mathbb{R}^{2}\backslash\{0\}$,
and the Klein bottle $K$.
\begin{example}[Kinetic energy on $M_{r,a}$]
Consider the open subset $W\subset\mathbb{R}^{4}$ of the Euclidean
space, where $W=\mathbb{R}\times\left(\mathbb{R}^{3}\backslash\{(0,0,z)\}\right)$,
endowed with its open submanifold structure and global Cartesian coordinates
$\left(t,x,y,z\right)$. We introduce an atlas on $W$, adapted to
fibered open M\"obius strip $\mathbb{R}\times M_{r,a}$ of radius
$r$ and width $2a$, $0<a<r$. Let $V$ and $\bar{V}$ be an open
covering of $W$, given by
\[
V=\mathbb{R}\times\left(\mathbb{R}^{3}\backslash\left((-\infty,0]\times\{0\}\times\mathbb{R}\right)\right),\quad\bar{V}=\mathbb{R}\times\left(\mathbb{R}^{3}\backslash\left([0,\infty)\times\{0\}\times\mathbb{R}\right)\right),
\]
and define coordinate functions $\left(t,\varphi,\tau,\vartheta\right)$
on $V$ by $t=t$,
\begin{align*}
\varphi & =\mathrm{atan}2(y,x),\\
\tau & =\frac{1}{\sqrt{2}}\left(\sqrt{x^{2}+y^{2}}-r\right)\sqrt{1+\frac{x}{\sqrt{x^{2}+y^{2}}}}+\frac{1}{\sqrt{2}}\textrm{sgn}(y)z\sqrt{1-\frac{x}{\sqrt{x^{2}+y^{2}}}},\\
\vartheta & =-\frac{1}{\sqrt{2}}\left(\sqrt{x^{2}+y^{2}}-r\right)\textrm{sgn}(y)\sqrt{1-\frac{x}{\sqrt{x^{2}+y^{2}}}}+\frac{1}{\sqrt{2}}z\sqrt{1+\frac{x}{\sqrt{x^{2}+y^{2}}}},
\end{align*}
and $\left(\bar{t},\bar{\varphi},\bar{\tau},\bar{\vartheta}\right)$
on $\bar{V}$ by $\bar{t}=t$, and
\[
\bar{\varphi}=\begin{cases}
\mathrm{atan}2(y,x), & y\geq 0,\\
\mathrm{atan}2(y,x)+2\pi, & y<0,
\end{cases}\ \ 
\bar{\tau}=\begin{cases}
\tau\,\mathrm{sgn}(y), & y\neq 0,\\
z, & y=0,
\end{cases}\ \ 
\bar{\vartheta}=\begin{cases}
\vartheta\,\mathrm{sgn}(y), & y\neq 0,\\
x+r, & y=0,
\end{cases}
\]
where $\mathrm{atan}2(y,x)$ is the arctangent function with two arguments.
One can directly check that the pairs $\left(V,\Psi\right)$, $\Psi=\left(t,\varphi,\tau,\vartheta\right)$,
and $\left(\bar{V},\bar{\Psi}\right)$, $\bar{\Psi}=\left(\bar{t},\bar{\varphi},\bar{\tau},\bar{\vartheta}\right)$,
are charts on $W$ \emph{adapted} to $\mathbb{R}\times M_{r,a}$,
constituting a~smooth atlas on $W$. In the chart $\left(V,\Psi\right)$
(resp. $\left(\bar{V},\bar{\Psi}\right)$), $\mathbb{R}\times M_{r,a}$
has the equation $\vartheta=0$ with $-a<\tau<a$ (resp. $\bar{\vartheta}=0$
with $-a<\bar{\tau}<a$ ). On the intersection $V\cap\bar{V}$, the
chart transformations between $\left(V,\Psi\right)$ and $\left(\bar{V},\bar{\Psi}\right)$
is given by
\[
\Psi\circ\bar{\Psi}^{-1}:\bar{\Psi}(\bar{V})\backslash\{\bar{\varphi}=\pi\}\rightarrow\Psi(V)\backslash\{\varphi=0\},
\]
\begin{equation}
\Psi\circ\bar{\Psi}^{-1}(\bar{t},\bar{\varphi},\bar{\tau},\bar{\vartheta})=\begin{cases}
(\bar{t},\bar{\varphi},\bar{\tau},\bar{\vartheta}), & \bar{\varphi}\in(0,\pi),\\
(\bar{t},\bar{\varphi}-2\pi,-\bar{\tau},-\bar{\vartheta}), & \bar{\varphi}\in(\pi,2\pi),
\end{cases}\label{eq:MobiusTransform1}
\end{equation}

Let $\varepsilon$ be a~source form on $\mathbb{R}\times T^{1}M_{r,a}$,
locally expressed by 
\[
\varepsilon=\varepsilon_{\varphi}\omega^{\varphi}\wedge dt+\varepsilon_{\tau}\omega^{\tau}\wedge dt,
\]
where $\varepsilon_{\varphi}=A_{\varphi}+B_{\varphi\varphi}\ddot{\varphi}$
, $\varepsilon_{\tau}=A_{\tau}+B_{\tau\tau}\ddot{\tau}$, and
\begin{align*}
B_{\varphi\varphi} & =-\left(\left(r+\tau\cos\frac{\varphi}{2}\right)^{2}+\frac{\tau^{2}}{4}\right),\quad B_{\tau\tau}=-1,\\
A_{\varphi} & =\frac{1}{2}\dot{\varphi}^{2}\tau\sin\frac{\varphi}{2}\left(r+\tau\cos\frac{\varphi}{2}\right)-\frac{1}{2}\dot{\varphi}\dot{\tau}\left(4\cos\frac{\varphi}{2}\left(r+\tau\cos\frac{\varphi}{2}\right)+\tau\right),\\
A_{\tau} & =\frac{1}{4}\dot{\varphi}^{2}\left(4\cos\frac{\varphi}{2}\left(r+\tau\cos\frac{\varphi}{2}\right)+\tau\right).
\end{align*}
Using the chart transformation \eqref{eq:MobiusTransform1}, it is
easy to verify that $\varepsilon$ defines a~2-form on $\mathbb{R}\times T^{2}M_{r,a}$.
Since
\[
\left(\frac{\partial A_{\varphi}}{\partial\dot{\tau}}-\frac{\partial A_{\tau}}{\partial\dot{\varphi}}\right)_{\left(\varphi,\tau,0,0\right)}=\left(-\dot{\varphi}\left(4\cos\frac{\varphi}{2}\left(r+\tau\cos\frac{\varphi}{2}\right)+\tau\right)\right)_{\left(\varphi,\tau,0,0\right)}=0,
\]
Corollary \ref{cor:Simple} implies that $\varepsilon$ admits a global
Lagrangian
\[
\lambda=h\left(\mu_{0}+\kappa\right),
\]
where $\mu_{0}$ \eqref{eq:Eta0} reads
\begin{align*}
\mu_{0} & =-\left(A_{\varphi}+\frac{1}{2}\dot{\varphi}\dot{\tau}\left(4\cos\frac{\varphi}{2}\left(r+\tau\cos\frac{\varphi}{2}\right)+\tau\right)\right)td\varphi\\
 & -\left(A_{\tau}-\frac{1}{2}\dot{\varphi}^{2}\left(4\cos\frac{\varphi}{2}\left(r+\tau\cos\frac{\varphi}{2}\right)+\tau\right)\right)td\tau\\
 & -B_{\varphi\varphi}\dot{\varphi}td\dot{\varphi}-B_{\tau\tau}\dot{\tau}td\dot{\tau},
\end{align*}
and $\kappa$ \eqref{eq:kappa} is given by
\[
\kappa=K_{1}\alpha'+\left(\pi_{1}^{1}\right)^{*}K_{2}\left(\left(s_{1,0}^{1}\right)^{*}\alpha'\right),
\]
where from \eqref{eq:AlfaPrime},
\begin{align*}
\alpha' & =-\frac{1}{2}\dot{\varphi}\left(4\cos\frac{\varphi}{2}\left(r+\tau\cos\frac{\varphi}{2}\right)+\tau\right)d\varphi\wedge d\tau+B_{\varphi\varphi}d\varphi\wedge d\dot{\varphi}+B_{\tau\tau}d\tau\wedge d\dot{\tau}.
\end{align*}
From \eqref{eq:K1}, \eqref{eq:K2}, we get 
\[
\kappa=\left(\left(r+\tau\cos\frac{\varphi}{2}\right)^{2}+\frac{\tau^{2}}{4}\right)\dot{\varphi}d\varphi+\dot{\tau}d\tau.
\]
Hence a~global Lagrangian on $\mathbb{R}\times T^{2}M_{r,a}$ reads
\begin{align*}
\lambda & =\mathscr{L}dt,
\end{align*}
where $\mathscr{L}:\mathbb{R}\times T^{2}M_{r,a}\rightarrow\mathbb{R}$
is the Lagrange function, given by
\begin{align*}
\mathscr{L} & =-\frac{1}{2}\dot{\varphi}^{2}\tau\sin\frac{\varphi}{2}\left(r+\tau\cos\frac{\varphi}{2}\right)\dot{\varphi}t+\frac{1}{4}\dot{\varphi}^{2}\left(4\cos\frac{\varphi}{2}\left(r+\tau\cos\frac{\varphi}{2}\right)+\tau\right)\dot{\tau}t\\
 & +\left(\left(r+\tau\cos\frac{\varphi}{2}\right)^{2}+\frac{\tau^{2}}{4}\right)\dot{\varphi}\ddot{\varphi}t+\dot{\tau}\ddot{\tau}t+\left(\left(r+\tau\cos\frac{\varphi}{2}\right)^{2}+\frac{\tau^{2}}{4}\right)\dot{\varphi}^{2}+\dot{\tau}^{2}.
\end{align*}
Note that $\varepsilon$ admits also the kinetic energy Lagrangian
$\mathscr{L}_{kin}dt$, given by a~global function on $\mathbb{R}\times T^{1}M_{r,a}$,
\[
\mathscr{L}_{kin}=\frac{1}{2}\left(\dot{\tau}^{2}+\left(\left(r+\tau\cos\frac{\varphi}{2}\right)^{2}+\frac{\tau^{2}}{4}\right)\dot{\varphi}^{2}\right),
\]
which arises as a~\emph{pull-back} of the standard kinetic Lagrangian
on $J^{1}\left(\mathbb{R}\times\mathbb{R}^{3}\right)$ with respect
to the canonical embedding of $M_{r,a}$ into the Euclidean space
$\mathbb{R}^{3}$, and which is\emph{ equivalent} to $\lambda$ (i.e.
the associated Euler-Lagrange forms coincide).
\end{example}

\begin{example}[Gyroscopic equations on punctured torus]
 In this example, we study \emph{gyroscopic type} equations on the
\emph{punctured torus} $T_{P}$. The torus $T=S_{R}^{1}\times S_{r}^{1}$
(Cartesian products of circles of radius $R$ and $r$, respectively)
in the Euclidean space $\mathbb{R}^{3}$ is endowed with its smooth
manifold structure as the Cartesian product of smooth structures on
$S^{1}$, and the punctured torus $T_{P}=T\setminus\{(R+r,0,0)\}$
has the open submanifold structure. The parametric equations of $T\subset\mathbb{R}^{3}$
reads
\begin{align*}
 & x=\left(R+r\cos\vartheta\right)\cos\varphi,\quad y=\left(R+r\cos\vartheta\right)\sin\varphi,\quad z=r\sin\vartheta,
\end{align*}
where $0<r<R$, $\varphi\in[0,2\pi)$, and $\vartheta\in[0,2\pi)$.
The point $P$ with Euclidean coordinates $(R+r,0,0)$ arises for
$\varphi=0=\vartheta$. A~smooth atlas on $T_{P}$ can be chosen
as the following four charts, $\left(U_{\pi\pi},\Phi_{\pi\pi}\right)$,
$\Phi_{\pi\pi}=\left(\varphi,\vartheta\right)$, where $\varphi$
and $\vartheta$ are the angle coordinates on $S_{R}^{1}$ and $S_{r}^{1}$,
respectively, and $\left(U_{00},\Phi_{00}\right)$, $\Phi_{00}=\left(\bar{\varphi},\bar{\vartheta}\right)$,
$\left(U_{\pi0},\Phi_{\pi0}\right)$, $\Phi_{\pi0}=\left(\varphi,\bar{\vartheta}\right)$,
$\left(U_{0\pi},\Phi_{0\pi}\right)$, $\Phi_{0\pi}=\left(\bar{\varphi},\vartheta\right)$,
where $-\pi<\varphi,\vartheta<\pi$, and $0<\bar{\varphi},\bar{\vartheta}<2\pi$.

Consider the system
\begin{equation}
\varepsilon_{\varphi}=A_{\varphi}+B_{\varphi\varphi}\ddot{\varphi}+B_{\varphi\vartheta}\ddot{\vartheta},\quad\varepsilon_{\vartheta}=A_{\vartheta}+B_{\varphi\vartheta}\ddot{\varphi}+B_{\vartheta\vartheta}\ddot{\vartheta},\label{eq:GyroscopicSystem}
\end{equation}
where $B_{\varphi\varphi}=\left(R+r\cos\vartheta\right)^{2}$, $B_{\vartheta\vartheta}=r^{2}$, $B_{\varphi\vartheta}=0$, and
\begin{align*}
 & A_{\varphi}=-r\left(R+r\cos\vartheta\right)\left(2\dot{\varphi}\sin\vartheta+a\sin\vartheta-b\sin\varphi\cos\vartheta+c\cos\varphi\cos\vartheta\right)\dot{\vartheta},\\
 & A_{\vartheta}=r\left(R+r\cos\vartheta\right)\left(\dot{\varphi}\sin\vartheta+a\sin\vartheta-b\sin\varphi\cos\vartheta+c\cos\varphi\cos\vartheta\right)\dot{\varphi},
\end{align*}
and $a$, $b$, $c$ are some functions depending on $\varphi,\vartheta$.
System \eqref{eq:GyroscopicSystem} defines a~differential form $\varepsilon=(\varepsilon_{\varphi}\omega^{\varphi}+\varepsilon_{\vartheta}\omega^{\vartheta})\wedge dt$
on $\mathbb{R}\times T^{2}T_{P}$ \emph{globally}, and arises as the
pull-back of the gyroscopic type system in the Euclidean space $\mathbb{R}\times\mathbb{R}^{3}$,
\[
\ddot{x}=a\dot{y}+b\dot{z},\quad\ddot{y}=-a\dot{x}+c\dot{z},\quad\ddot{z}=-b\dot{x}-c\dot{y}.
\]
It is straightforward to verify with the help of Theorem \ref{thm:LocalVariationality},
\eqref{eq:HelmAB}, that \eqref{eq:GyroscopicSystem} is locally variational (cf. Krupka, Urban, and Voln\'a \cite{KUV}).
Theorem \ref{thm:Top-Var} implies that \eqref{eq:GyroscopicSystem}
is also globally variational and admits a~global Lagrangian of the
form
\begin{equation}\label{Last}
\lambda=h\left(\mu_{0}+\kappa+\eta\right),
\end{equation}
where $\mu_{0}$ and $\kappa$ are given by \eqref{eq:Eta0} and \eqref{eq:kappa},
and $\eta$ is a~solution of the equation $\omega=d\eta$ on $T_{P}$,
where $2$-form $\omega$ is given by \eqref{eq:omega}. Thus, we
have
\begin{align*}
 & \mu_{0}=r\left(R+r\cos\vartheta\right)\dot{\varphi}^2\sin\vartheta td\vartheta-\left(R+r\cos\vartheta\right)^{2}\dot{\varphi}td\dot{\varphi}-r^{2}\dot{\vartheta}td\dot{\vartheta},\\
 & \kappa=-\left(R+r\cos\vartheta\right)^{2}\dot{\varphi}d\varphi-r^{2}\dot{\vartheta}d\vartheta,
\end{align*}
and
\[
\omega=-r\left(R+r\cos\vartheta\right)\left(a\sin\vartheta-b\sin\varphi\cos\vartheta+c\cos\varphi\cos\vartheta\right)d\varphi\wedge d\vartheta.
\]
Clearly, for a~particular choice of the functions $a$, $b$, $c$,
for instance $a=\cos\vartheta\sin\varphi$, $b=\sin\vartheta+\cos\varphi$,
$c=\sin\varphi$, the $2$-form $\omega$ vanishes, hence by Corollary
\ref{cor:Simple} we get a~global Lagrangian $\lambda=h\left(\mu_{0}+\kappa\right)$.

In case that the $2$-form $\omega$ on $T_{P}$ does \emph{not} vanish,
we may proceed as described in Section 4 to construct a~solution
$\eta$ of $\omega=d\eta$, defined on $T_{P}$. To this purpose,
for instance, one may consider an open \emph{star-shaped} cover $\{V_{j}\}$
of $T_{P}$ satisfying conditions of Lemma \ref{lem:Covering} such that in addition each $V_j$ is a~preimage of rectangles with respect to coordinate mappings and obeys the condition $V_j\cap V_{j+1}\neq\emptyset$; e.g. $\{V_{j}\}$ arises as a~refinement of an open cover $\{\tilde{V}_{k}\}$ of $T_{P}$,
where $\tilde{V}_{0}=T_{P}\setminus(\mathrm{cl}D_{2})$, $\tilde{V}_{k}=D_{k}\setminus(\mathrm{cl}D_{k+2})$,
$k=1,2,\ldots$, 
\[
D_{i}=\left(\Phi_{\pi\pi}\right)^{-1}\left(\left(-\frac{1}{i},\frac{1}{i}\right)\times\left(-\frac{1}{i},\frac{1}{i}\right)\setminus\{(0,0)\}\right),\quad i=1,2,\ldots,
\]

According to the proof of Theorem \ref{thm:TopNonOrient} (Lee \cite{Lee}; cf. Section 4), let $\{ \psi_j \}$ be a~smooth partition of unity, subordinate to this cover, and $\theta_j$ be a~$2$-form compactly supported in $V_j\cap V_{j+1}$ for each $j$ (obtained with the help of a~smooth bump function). For every $j$, we put $\omega_j=\psi_j\omega$, $c_1=\int_{V_1}\omega_1$, $c_{j+1}=\int_{V_{j+1}}\left(\omega_{j+1}+c_j\theta_j\right)$. Then there exist a~compactly supported $1$-form $\eta_1$ such that $d\eta_1=\omega_1-c_1\theta_1$, and for each $j$, a~compactly supported $1$-form $\eta_j$ such that $d\eta_j=\omega_{j+1}+c_j\theta_j - c_{j+1}\theta_j$. $1$-form $\eta=\sum_j\eta_j$ is then a~solution of $\omega=d\eta$ on $T_P$.

Note that for constant functions $a$, $b$, and $c$, we get a~global Lagrangian (\ref{Last}), where
\begin{align*}
 & \eta = - r\left(R+r\cos\vartheta\right)\cos\vartheta\left(b\cos\varphi + c\sin\varphi \right) d\vartheta - r\left( aR\cos\vartheta + \frac{1}{4}ar\cos 2\vartheta \right) d\varphi.
\end{align*}

\end{example}

\end{document}